\documentclass[11pt,reqno]{amsart}
\usepackage{amsmath,amssymb,amsthm, hyperref, amscd}
\usepackage[T1]{fontenc}
\usepackage{textcomp}
\usepackage[scale=0.95,ttdefault]{AnonymousPro}
\hypersetup{
    colorlinks=true,
    linkcolor=blue,
    urlcolor=cyan,
    }
\usepackage[noadjust]{cite}
\renewcommand{\citepunct}{;\penalty\citemidpenalty\ }

\title[Relative $F$-rational signature]{The gamma-construction and permanence properties of the (relative) $F$-rational signature}

\newcommand{\fa}{\mathfrak{a}}

\newcommand{\fm}{\mathfrak{m}}
\newcommand{\fn}{\mathfrak{n}}

\newcommand{\fp}{\mathfrak{p}}

\newcommand{\Spec}{\operatorname{Spec}}
\newcommand{\rank}{\operatorname{rank}}

\newcommand{\HT}{\operatorname{ht}}

\newcommand{\cC}{\mathcal{C}}
\newcommand{\cS}{\mathcal{S}}

\newcommand{\bA}{\mathbf{A}}
\newcommand{\bF}{\mathbf{F}}

\newcommand{\bQ}{\mathbf{Q}}
\newcommand{\bP}{\mathbf{P}}

\newcommand{\bZ}{\mathbf{Z}}

\newcommand{\Tag}[1]{\href{https://stacks.math.columbia.edu/tag/#1}{\texttt{#1}}}
\newcommand{\citestacks}[1]{\cite[Tag \Tag{#1}]{Stacks}}
\newcommand{\citetwostacks}[2]{\cite[Tags \Tag{#1} and \Tag{#2}]{Stacks}}

\newcommand{\ppower}[2]{{#1}^{[p^{#2}]}}
\newcommand{\ehk}[1]{e_{\mathrm{HK}}({#1})}

\author{Shiji Lyu}
\address{Department of Mathematics, Statistics, and Computer Science\\University of Illinois at Chicago\\Chicago, IL
60607-7045\\USA}
\email{\href{mailto:slyu@uic.edu}{slyu@uic.edu}}
\urladdr{\url{https://homepages.math.uic.edu/~slyu/}}

\newtheorem{Thm}{Theorem}[section]

\newtheorem{Lem}[Thm]{Lemma}
\newtheorem{Cor}[Thm]{Corollary}
\newtheorem{Prop}[Thm]{Proposition}

\theoremstyle{definition}
\newtheorem{Def}[Thm]{Definition}
\newtheorem{Constr}[Thm]{Construction}

\theoremstyle{remark}
\newtheorem{Rem}[Thm]{Remark}

\begin{document}
\begin{abstract}
   We study some permanence properties of the relative $F$-rational signature defined and studied by Smirnov--Tucker.
   We show that this invariant is compatible with the gamma-construction,
   and then derive other main results from the $F$-finite case established by Smirnov--Tucker.
   These results can be seen as quantitative versions of results on $F$-rationality of V\'elez.
   We also obtain limited results about the $F$-rational signature defined and studied by Hochster--Yao.
   We explore some features of the gamma-construction along the way,
   which may be of independent interest.
\end{abstract}
\maketitle

\noindent\textsc{Conventions}. 
Throughout this paper,
$p$ is a fixed prime number. All rings contain $\bF_p$.
For an ideal $\fa$ of a ring $R$ and a positive integer $e$,
$\ppower{\fa}{e}$ denotes the ideal generated by $\{a^{p^e}\mid a\in\fa\}$,
or equivalently, the ideal generated by $\{a^{p^e}\mid a\in \Sigma\}$
where $\Sigma$ is a set of generators of $\fa$.
The length of a finite length module $M$ over a ring $R$ is denoted $l_R(M)$,
and usually just $l(M)$ if $R$ is clear from the context.
An \'etale-local map of local rings is a local map of local rings that is a localization of an \'etale ring map.
The completion of a local ring $A$ is denoted $A^\wedge$.

\section{Introduction}


For a Noetherian local ring $(R,\fm)$
and an $\fm$-primary ideal $I$,
the \emph{Hilbert--Kunz function} of $I$ is the fuction
$e\mapsto {l(R/\ppower{I}{e})}$.
It is studied by Kunz \cite{Kun76} (at least in the case $I=\fm$),
and the
\emph{Hilbert-Kunz multiplicity}
$e_{\mathrm{HK}}(R,I)=\lim_e \frac{l(R/\ppower{I}{e})}{p^{e\dim R}}$
is shown to exist by Monsky \cite{Mon83}.

The Hilbert--Kunz multiplicity has a direct connection to tight closure theory.
In a Noetherian reduced equidimensional complete local ring $(R,\fm)$, given two $\fm$-primary ideals $I\subseteq J$,
$J$ is contained in the tight closure of $I$ if and only if
$e_{\mathrm{HK}}(I)=e_{\mathrm{HK}}(J)$ \cite[Theorem 8.17]{HH90}.
Here we omitted the ring $R$ from the notations.
Hochster--Yao \cite{HY} and Smirnov--Tucker \cite{ST} defined the \emph{$F$-rational signature}
$s_{\mathrm{rat}}(R)$ and the \emph{relative $F$-rational signature} $s_{\mathrm{rel}}(R)$
of a Noetherian local ring $(R,\fm)$,
\begin{align*}
    s_{\mathrm{rat}}(R)&=\inf\{\ehk{I_0}-\ehk{I_0+uR}\mid I_0\text{ is a parameter ideal of }R, u\not\in I_0\}\\
    s_{\mathrm{rel}}(R)&=\inf\left\{\frac{\ehk{I_0}-\ehk{I}}{l(R/I_0)-l(R/I)}\middle\vert I_0\text{ is a parameter ideal of }R, I\supsetneq I_0\right\}.
\end{align*}
These invariants are known to characterize the $F$-rationality of $R$ under mild conditions
\citeleft\citen{HY}\citemid Theorem 4.1\citepunct \citen{ST}\citemid Proposition 2.1\citeright.
Various permanence properties for $s_\mathrm{rat}$ are studied in \cite{HY};
in \cite{ST}, via the alternative characterization of $s_\mathrm{rel}$ as the dual $F$-signature \cite[Corollary 5.9]{ST} in the $F$-finite case,
more permanence properties of $s_\mathrm{rel}$ are obtained.

In this paper, we continue the study in \cite{ST} 
on properties of the relative $F$-rational signature,
especially for non-$F$-finite rings.
The classical way of moving from the non-$F$-finite setting to the $F$-finite setting 
is via the $\Gamma$-construction of Hochster--Huneke, 
which we review in \S\S\ref{subsec:GammaBasic}.
Our main result is the following; for explicit statements, 
see Theorems \ref{thm:GammaSrel}, \ref{thm:RegExtn}, and \ref{thm:SemiContOnGring}.

\begin{Thm}
The relative $F$-rational signature is compatible with the $\Gamma$-construction,
ascends along a flat extension of local rings with geometrically regular closed fiber,
and defines a lower semi-continuous function on the spectrum of a finite type algebra over a local G-ring.\footnote{Readers unfamiliar of G-rings can refer to \citestacks{07GG} and \cite[(34.A)]{Matsumura}.}
\end{Thm}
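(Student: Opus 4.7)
The plan is to establish Theorem \ref{thm:GammaSrel}, the $\Gamma$-compatibility of $s_{\mathrm{rel}}$, as the central technical result, and then to derive Theorems \ref{thm:RegExtn} and \ref{thm:SemiContOnGring} by reducing to the $F$-finite case proved by Smirnov--Tucker.

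For $s_{\mathrm{rel}}(R) = s_{\mathrm{rel}}(R^\Gamma)$ I would treat the two inequalities separately. The direction $s_{\mathrm{rel}}(R^\Gamma) \leq s_{\mathrm{rel}}(R)$ should be routine: since $R \to R^\Gamma$ is faithfully flat with geometrically regular fibers, a parameter ideal $I_0$ of $R$ extends to a parameter ideal of $R^\Gamma$, and for any $\fm$-primary $I \supsetneq I_0$ the $\Gamma$-construction preserves both $l(R/\cdot)$ and $\ehk{\cdot}$, so the defining quotient is unchanged; feeding the extended pairs into the infimum for $R^\Gamma$ yields the inequality. The reverse direction is the crux, since a pair of primary ideals $(J_0, J)$ in $R^\Gamma$ need not descend to $R$. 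My plan is to exhibit $R^\Gamma$ as a filtered colimit of intermediate subrings $R^{\Gamma_\alpha}$ for $\Gamma_\alpha$ differing from $\Gamma$ in only finitely many positions; any fixed pair $(J_0, J)$ is then extended from some $R^{\Gamma_\alpha}$, and at each finite level $R \to R^{\Gamma_\alpha}$ is comparatively well-behaved (it is essentially a finite modification of the coefficient field, so many invariants pass back and forth cleanly), allowing the pair to be tracked back to $R$ with controllable distortion. Combined with the preservation of $\ehk{\cdot}$ and $l$ under $\Gamma$, a ratio on $R^\Gamma$ can then be matched by a ratio on $R$, giving the desired inequality.

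With the compatibility in hand, Theorem \ref{thm:RegExtn} follows by choosing cofinite $\Gamma$'s for $R$ and $S$ such that both $R^\Gamma$ and $S^\Gamma$ are $F$-finite and the induced map $R^\Gamma \to S^\Gamma$ remains a flat local map with geometrically regular closed fiber; the $F$-finite ascent theorem of Smirnov--Tucker then combines with $\Gamma$-invariance to give $s_{\mathrm{rel}}(R) \leq s_{\mathrm{rel}}(S)$. Theorem \ref{thm:SemiContOnGring} requires a spectrum-level version of the $\Gamma$-construction, and the G-ring hypothesis supplies geometrically regular formal fibers at every point, making this feasible; lower semi-continuity then reduces to the $F$-finite statement of Smirnov--Tucker. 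Along the way one must verify that the $\Gamma$-construction interacts well with localization and with taking fibers of a finite-type morphism, which should be largely standard material about G-rings combined with the $\Gamma$-technology developed in the preliminaries.

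The principal obstacle I foresee is the reverse inequality in Theorem \ref{thm:GammaSrel}: the infimum defining $s_{\mathrm{rel}}$ involves a quotient over a \emph{pair} of $\fm$-primary ideals, so even though $\ehk{\cdot}$ and $l$ are individually preserved under the $\Gamma$-construction, passing from a pair $(J_0, J)$ in $R^\Gamma$ to a pair $(I_0, I)$ in $R$ requires a simultaneous approximation argument that respects both numerator and denominator without loss. Making this approximation quantitatively tight---particularly when $J_0$ is not extended from $R$---will be the technically most delicate step, and everything else in the paper largely cascades from this single point.
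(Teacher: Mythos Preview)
Your overall architecture---prove $\Gamma$-compatibility first, then deduce the other two by reducing to the $F$-finite case of Smirnov--Tucker---matches the paper. But the heart of the argument, the reverse inequality in Theorem \ref{thm:GammaSrel}, has a genuine gap.

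Your plan is to write $R^\Gamma$ as a filtered colimit of intermediate rings $R^{\Gamma_\alpha}$, descend a pair $(J_0,J)$ to some finite stage, and then pass back to $R$ ``with controllable distortion'' because $R\to R^{\Gamma_\alpha}$ is ``essentially a finite modification of the coefficient field.'' This is where the argument breaks. The extension $R\to R^{\Gamma_\alpha}$ is purely inseparable on residue fields, and an ideal of $R^{\Gamma_\alpha}$ generated using genuinely new $p$-th roots simply has no preimage in $R$ with the same length data; no amount of finiteness of the stage changes that. You have not proposed any mechanism that produces an ideal of $R$ from one of $R^{\Gamma_\alpha}$, and ``controllable distortion'' is doing all the work without any content behind it. Note also that the paper proves $s_{\mathrm{rel}}(R_Q)=\sup_\Gamma s_{\mathrm{rel}}(R^\Gamma_{Q^\Gamma})$, not equality for a single $\Gamma$; the latter need not hold.

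The paper's route is quite different. It introduces truncated invariants $s^e_{I_0}(R)$ at a fixed Frobenius level $e$ and uses Polstra's uniform convergence bound so that it suffices to match the truncated invariants. For fixed $e$, the relevant ideals $I$ with $\fm I\subseteq I_0\subsetneq I$ are parametrized by nonzero matrices over the residue field, and $s^e_{I_0}(R,I)$ is essentially a constructible function of the matrix entries. If for all small $\Gamma$ the minimum value $s$ is realized over $\kappa(Q^\Gamma)$, a constructibility argument (Lemma \ref{lem:AllGammaEpoint}) produces a point over a finite \emph{separable} extension $E/\kappa(Q)$, and an additivity trick for the minimum (Lemma \ref{lem:MinSadditive}) shows $s^e_{I_0}$ is invariant under \'etale-local extensions, so one descends to $R_Q$. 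None of these ingredients---truncation, Polstra's bound, the matrix parametrization, or the \'etale-local descent---appears in your plan.

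For Theorem \ref{thm:RegExtn} you assume one can choose $\Gamma$'s so that $R^\Gamma\to S^{\Gamma'}$ remains regular. This is true but nontrivial: the coefficient field of $S$ need not contain that of $R$, and the paper spends a full section on differential-module computations (Theorem \ref{thm:GammaSmooth}, Corollary \ref{cor:RegAGammaToBGamma}) to arrange it. You should flag this as a real step rather than a formality.
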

Weaker results and difficulties in the case of the $F$-rational signature are discussed in \S\ref{sec:Srat}.

The following result is used for the proof of Theorem \ref{thm:RegExtn} and may be of independent interest.
For an explicit statement see Corollary \ref{cor:RegAGammaToBGamma}.
\begin{Thm}
A regular local map between Noetherian complete local rings can be approximated by regular local maps between $F$-finite Noetherian local rings.
\end{Thm}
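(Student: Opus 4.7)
The approach is to apply the Hochster--Huneke $\Gamma$-construction to $A$ and $B$ simultaneously, choosing compatible cofinite subsets of $p$-bases of the residue fields so that the resulting $F$-finite Noetherian complete local rings $A^\Gamma$ and $B^{\Gamma'}$ are connected by a regular local map. ``Approximation'' here means that $\Gamma$ may be chosen cofinite in any prescribed $p$-basis of the residue field of $A$.

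Let $k_A$ and $k_B$ be the residue fields of $A$ and $B$. Since $f\colon A\to B$ is regular, the closed fiber $B/\fm_A B$ is geometrically regular over $k_A$, which forces $k_B/k_A$ to be separable in the sense of Mac Lane. Fix a $p$-basis $\Lambda$ of $k_A$ and extend it to a $p$-basis $\Lambda'\supseteq\Lambda$ of $k_B$. For any cofinite $\Gamma\subseteq\Lambda$, set $\Gamma':=\Gamma\cup(\Lambda'\setminus\Lambda)$. Then $\Lambda'\setminus\Gamma'=\Lambda\setminus\Gamma$ is finite, so both $A^\Gamma$ and $B^{\Gamma'}$ are $F$-finite; using compatible Cohen-type lifts of $\Lambda$ and $\Lambda'$ into $A$ and $B$, these constructions come equipped with a natural local map $A^\Gamma\to B^{\Gamma'}$.

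The main step is to verify that $A^\Gamma\to B^{\Gamma'}$ is regular. Flatness is preserved under the $\Gamma$-construction applied to the flat map $f$. For the residue fields, both $k_A^\Gamma$ and $k_B^{\Gamma'}$ have $\Lambda\setminus\Gamma$ as a $p$-basis, so $\Omega_{k_B^{\Gamma'}/k_A^\Gamma}=0$ and the extension is separable algebraic. For geometric regularity of the fibers, I would factor as $A^\Gamma\to B^\Gamma\to B^{\Gamma'}$, where $B^\Gamma$ denotes the $\Gamma$-construction on $B$ with the smaller parameter $\Gamma\subseteq\Lambda'$. The first map is (the completion of) the flat base change of $f$ along $A\to A^\Gamma$, hence regular by base-change stability together with the G-ring property of complete local rings. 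For the composite to remain regular, one must argue that tensoring the (geometrically regular) fibers of $A^\Gamma\to B^\Gamma$ with $B^{\Gamma'}$ over $B^\Gamma$ preserves geometric regularity; crucially, the extra roots adjoined in $B^\Gamma\to B^{\Gamma'}$ correspond to $\Lambda'\setminus\Lambda$, which are $p$-independent in $k_B$ but absent from $k_A$, so the resulting modification lives entirely on the $B$-side.

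The principal obstacle is this last point: verifying that the composition $A^\Gamma\to B^\Gamma\to B^{\Gamma'}$ remains regular despite the second factor on its own having a purely inseparable residue field extension. A promising route is to work at finite levels $A^{\Gamma,e}$, $B^{\Gamma',e}$ (adjoining only $p^e$-th roots) where the rings are finite and tractable, verify the corresponding regularity statements directly via Cohen structure and the separability of $k_B/k_A$, and pass to the limit using excellence of the complete local rings involved.
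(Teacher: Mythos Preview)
Your proposal contains a genuine gap at the very first step: the claim that the residue field extension $k_B/k_A$ is separable in the sense of Mac Lane is false in general. Geometric regularity of the closed fiber $\overline{B}=B/\fm_A B$ over $k_A$ does \emph{not} force the residue field extension to be separable. For a concrete counterexample, take $k_A=\bF_p(a)$ with $a$ transcendental, and $\overline{B}=k_A[x]_{(x^p-a)}$; this is a DVR whose residue field is $k_B=k_A(a^{1/p})$, and it is geometrically regular over $k_A$ (after base change to any purely inseparable $k'/k_A$ one still gets a DVR), yet $k_B/k_A$ is purely inseparable. In this example the unique element $a$ of your $p$-basis $\Lambda$ becomes a $p$-th power in $k_B$, so $\Lambda$ is not $p$-independent in $k_B$ and cannot be extended to a $p$-basis $\Lambda'$ of $k_B$. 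Your definition of $\Gamma'$ and everything downstream (the ``compatible Cohen-type lifts'', the claim that $\Lambda\setminus\Gamma$ is a common $p$-basis of $k_A^\Gamma$ and $k_B^{\Gamma'}$, the factorization $A^\Gamma\to B^\Gamma\to B^{\Gamma'}$) collapses.

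The paper's argument confronts exactly this obstacle. Instead of extending $\Lambda$, it first proves that only a \emph{cofinite} subset $\Gamma_0\subseteq\Lambda$ is guaranteed to remain $p$-independent in $l=k_B$ (using that the kernel of $\Omega_{\overline{B}/\bF_p}\otimes l\to\Omega_{l/\bF_p}$ is finite-dimensional). It then shows there is a coefficient field $l'\subseteq B$ containing $\varphi(\Gamma_0)$ but \emph{not necessarily} containing the coefficient field $k\subseteq A$, and extends $\varphi(\Gamma_0)$ (not $\varphi(\Lambda)$) to a $p$-basis $\Lambda'$ of $l'$. Even then, not every $\Gamma'\supseteq\varphi(\Gamma)$ works: one needs a further cofinite restriction $\Gamma'\subseteq\Gamma'_0$, and the verification of regularity proceeds via Andr\'e's criterion together with a careful computation (Lemma~\ref{lem:GammaLostWhatOmega}) of exactly which differentials are killed by the $\Gamma$-construction. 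Your proposed factorization through ``$B^\Gamma$'' and the finite-level/limit argument do not substitute for these steps; in particular, even when it makes sense, the base change $A^\Gamma\otimes_A B$ is not the same object as $B^\Gamma$ unless the coefficient fields are compatible, which in general they are not.
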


The strategy is as follows.
Modulo Corollary \ref{cor:RegAGammaToBGamma},
which is proved in \S\ref{sec:GammaReg}, we can apply the $F$-finite case in \cite{ST} to prove our main theorem,
as soon as we prove compatibility with the $\Gamma$-construction.
To do so,
we use a uniform boundedness result of Polstra \cite{Pol} to replace the Hilbert-Kunz multiplicity $\ehk{R,I}$ 
with an individual value of the Hilbert-Kunz function $l(R/\ppower{I}{e})$ normalized by a factor of $p^{-e\dim R}$.
See \S\ref{sec:Unif},
where a slightly stronger version of Polstra's theorem is recorded.

For a fixed $e$,
we prove compatibility of the ``truncated'' invariant in \S\ref{sec:TruncatedGamma}.
The method is more or less model-theoretic;
ideals of our concern are parameterized by matrices,
and various lengths are virtually constructible functions of the entries of the matrices.
(We do not require any model-theoretic knowledge,
and the argument is carried out using the language of abstract and linear algebra.)
We are then facing the following question:
if a system of polynomial equations
defined over a field $k$ has a solution over $k^\Gamma$
for all small $\Gamma$,
does it necessarily have a solution over $k$?
The answer is yes, at least up to a finite separable extension of $k$,
see Lemma \ref{lem:AllGammaEpoint}.
This extra complication is settled in \S\ref{sec:fet}, owing to a trick
that only works for the relative $F$-rational signature and not the $F$-rational signature.\\

\textsc{Acknowledgements}. The author thanks Thomas Polstra, Kevin Tucker, Karl Schwede, and Ilya Smirnov for helpful discussions.

\section{Preparation}\label{sec:preliminary}
\subsection{\texorpdfstring{$\Gamma$}{Gamma}-construction}\label{subsec:GammaBasic}
We recall the classical $\Gamma$-construction of Hochster--Huneke \cite{HH94}.

\begin{Constr}[{\cite[(6.11)]{HH94}}]\label{constr:Gamma}
Let $A$ be a Noetherian complete local ring, $k$ a coefficient field of $A$.
Let $\Lambda$ be a $p$-basis of $k$
and $\Gamma\subseteq\Lambda$ a cofinite subset.

For a positive integer $e$, let $k^{\Gamma,e}$ be the field obtained from adjoining the $p^{e}$th root of all elements in $\Gamma$,
and let
$k^\Gamma=\bigcup_e k^{\Gamma,e}$
and $A^\Gamma=\bigcup_e k^{\Gamma,e}\hat{\otimes}_k A$.

Finally, for a finite type $A$-algebra $R$ and $Q\in\Spec(R)$, let $R^\Gamma=A^\Gamma\otimes_A R$
and $Q^\Gamma=\sqrt{QR^\Gamma}$.
\end{Constr}
%

We shall use the following basic facts about the $\Gamma$-construction.

\begin{Lem}[Hochster--Huneke]\label{lem:GammaFacts}
Let $A$ be a Noetherian complete local ring, $k$ a coefficient field of $A$.
Let $\Lambda$ be a $p$-basis of $k$.
%

Let $R$ be a finite type $A$-algebra.
Then the followings hold.

\begin{enumerate}
    \item\label{GammaNoeFfin} For all cofinite subset $\Gamma$ of $\Lambda$,
    $R^\Gamma$ is Noetherian and $F$-finite, hence excellent;
    the map $R\to R^\Gamma$ is flat with Gorenstein fibers.

    \item\label{PiGammaUnivHomeo} For all cofinite subsets $\Gamma$ of $\Lambda$,
    $\Spec(R^\Gamma)\to \Spec(R)$ is a universal homeomorphism, thus for all $Q\in\Spec(R)$,
    $Q^\Gamma$ is a prime ideal.
    
    \item\label{QRGammaPrime} For every $Q\in\Spec(R)$, there exists a cofinite subset $\Gamma_0$ of $\Lambda$,
    such that for all cofinite subsets $\Gamma$ of $\Gamma_0$,
    $Q^\Gamma=QR^\Gamma$.
    
    \item\label{FoverKQLinearDisjt} For every $Q\in\Spec(R)$ and every finite extension $F/\kappa(Q)$ of fields,
    there exists a cofinite subset $\Gamma_0$ of $\Lambda$,
    such that for all cofinite subsets $\Gamma$ of $\Gamma_0$,
    $\kappa(Q^\Gamma)$ is linearly disjoint with $F$ over $\kappa(Q)$.
\end{enumerate}
\end{Lem}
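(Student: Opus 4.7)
The plan is to separate the field-theoretic content, which concerns $k \hookrightarrow k^\Gamma$ and hinges on $\Lambda \setminus \Gamma$ being finite, from the ring-theoretic content, and then propagate via base change from $A$ to $R$. For (1), I would follow \cite[(6.11)]{HH94}: the key inputs are that $\Lambda \setminus \Gamma$ is a finite $p$-basis of $k^\Gamma$ (so $k^\Gamma$ is $F$-finite), and that for each $e$ the completed tensor product $k^{\Gamma,e}\hat{\otimes}_k A$ is a complete Noetherian local ring flat over $A$ with residue field $k^{\Gamma,e}$. The colimit $A^\Gamma$ is Noetherian and $F$-finite (a finite $p$-basis comes from $\Lambda \setminus \Gamma$ together with coefficient-field variables of a Cohen presentation of $A$); flatness passes to the colimit, and the fibers of $A \to A^\Gamma$ are Gorenstein since they reduce to tensor products of field extensions with zero-dimensional complete-intersection rings coming from the formal variables. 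Base changing to any finite-type $A$-algebra $R$ preserves these properties. For (2), each element of $R^\Gamma$ has some $p^e$-th power in $R$, so $R \to R^\Gamma$ is integral and induces purely inseparable residue field extensions; combined with faithful flatness (from flatness in (1) together with integrality), $\Spec R^\Gamma \to \Spec R$ is integral, radicial, and surjective, hence a universal homeomorphism, so $\sqrt{QR^\Gamma}$ is prime for every $Q$.

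For (3), I would reduce to the case where $R$ is a domain and $Q=0$, and seek a cofinite $\Gamma_0$ making $R^\Gamma$ a domain for every cofinite $\Gamma \subseteq \Gamma_0$. By (2), $R^\Gamma$ has a unique minimal prime, so it suffices to make $R^\Gamma$ reduced. Using flatness and the reduced-fiber behavior of $R \to R^\Gamma$ (which follows from (1) combined with a closer look at the fiber structure, or from the geometrically-regular-fibers strengthening in \cite{HH94}), this reduces to making the generic fiber $K \otimes_k k^\Gamma$ a field, where $K = \operatorname{Frac}(R)$. Now $K/k$ is finitely generated, so its purely inseparable part is contained in $k(\Lambda_0^{1/p^\infty})$ for some finite $\Lambda_0 \subset \Lambda$. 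Setting $\Gamma_0 = \Lambda \setminus \Lambda_0$, a $p$-basis calculation shows $K$ and $k^\Gamma$ are linearly disjoint over $k$ for all cofinite $\Gamma \subseteq \Gamma_0$, so $K \otimes_k k^\Gamma$ is a field. For (4), the argument is parallel: enlarge $\Lambda_0$ to also control the finite inseparable part of $F/\kappa(Q)$, and repeat the $p$-basis calculation to obtain linear disjointness of $F$ and $\kappa(Q^\Gamma)$ over $\kappa(Q)$.

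The main obstacle I expect is the linear-disjointness step underlying (3) and (4), which rests on the claim that for a finitely generated field extension $L/k$ the purely inseparable closure of $k$ in $L$ sits in $k(\Lambda_0^{1/p^\infty})$ for some finite $\Lambda_0 \subset \Lambda$, together with the conversion of this $p$-basis statement into honest field-theoretic linear disjointness. Both are intuitive but require care to formalize; much of the relevant bookkeeping is already in \cite[\S 6]{HH94}. Once it is in hand, (3) and (4) follow routinely.
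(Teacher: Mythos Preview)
Your outline for (1) and (2) is essentially what the paper does: both are attributed to \cite[(6.11)]{HH94}, and your sketch of the universal-homeomorphism argument for (2) is correct.

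For (3) there is a real gap in your sketch. The claim ``$K/k$ is finitely generated'' is false whenever $\dim A>0$: already for $A=k[[t]]$, $R=A$, $Q=(0)$ one has $K=k((t))$. More fundamentally, after your reduction the generic fiber of $R\to R^\Gamma$ over $\Spec R$ is $A^\Gamma\otimes_A K$, not $k^\Gamma\otimes_k K$; these differ because $k^{\Gamma,e}\hat\otimes_k A$ is a genuine completion (recall $\Gamma$ is typically infinite), and $A\to K$ does not factor through $k$ unless $A=k$. The actual argument in \cite[Lemma 6.13(b)]{HH94}, which the paper simply cites, is indeed a $p$-basis argument in spirit but must track the completed structure carefully; your deferral to \cite[\S6]{HH94} is therefore doing all of the work, and the specific reduction you wrote down does not go through as stated.

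For (4) the paper takes a different and cleaner route than a parallel $p$-basis calculation: it deduces (4) formally from (2) and (3). Pick a finite $(R/Q)$-algebra $S$ that is a domain with $\operatorname{Frac}(S)=F$; then $S$ is finite type over $A$, so (3) applied to $Q\in\Spec(R)$ and to $(0)\in\Spec(S)$ yields a single $\Gamma_0$ making both $QR^\Gamma$ prime and $S^\Gamma$ a domain for all cofinite $\Gamma\subseteq\Gamma_0$. Using (2) one then computes
\[
F\otimes_{\kappa(Q)}\kappa(Q^\Gamma)=F\otimes_R R^\Gamma=F\otimes_S S^\Gamma=\operatorname{Frac}(S^\Gamma),
\]
a field, which gives the linear disjointness. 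This trick makes (4) a corollary of (3) rather than a repetition of its proof, and in particular does not require revisiting any of the delicate field theory you flagged as the main obstacle.
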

\begin{proof}
For (\ref{GammaNoeFfin})(\ref{PiGammaUnivHomeo}), see {\cite[(6.11)]{HH94}}.
(\ref{QRGammaPrime}) is part of \cite[Lemma 6.13(b)]{HH94}.
(\ref{FoverKQLinearDisjt}) follows from the proof of \cite[Lemma 6.13(b)]{HH94}, but let us formally deduce it as follows.
Let $S$ be a finite $(R/Q)$-algebra that is an integral domain with $F=\operatorname{Frac}(S)$.
From (\ref{PiGammaUnivHomeo})(\ref{QRGammaPrime}) we see that there exists a $\Gamma_0$ such that for all cofinite subsets $\Gamma$ of $\Gamma_0$, both $QR^\Gamma$ and $0\subseteq S^\Gamma$ are prime.
Then we see from (\ref{PiGammaUnivHomeo}) that
\[
F\otimes_{\kappa(Q)}\kappa(Q^\Gamma)=F\otimes_R R^\Gamma
=F\otimes_S S^\Gamma=\operatorname{Frac}(S^\Gamma)
\]
is a field,
as desired.
\end{proof}


\subsection{Solutions to polynomial equations}
We wish to show that if a system of polynomial equations has a solution over all sufficiently small $k^\Gamma$, 
then it has a solution over $k$.
We can only prove this after taking a finite separable extension of $k$, see below.
The result is not stated with the $\Gamma$-construction,
but in a more general form, in view of Lemma \ref{lem:GammaFacts}(\ref{FoverKQLinearDisjt}).

\begin{Lem}\label{lem:AllGammaEpoint}
Let $K/k$ be a field extension, $X$ a $k$-scheme of finite type,
and let $\{K_\alpha\}_\alpha$ be a cofiltered family of subextensions of $K/k$,
such that for every finite purely inseparable extension $F/k$,
there exists an $\alpha$ such that $F$ and $K_\alpha$ are linearly disjoint over $k$.

Assume that $X(K_\alpha)\neq\emptyset$ for all sufficiently small $K_\alpha$.
Then $X(E)\neq\emptyset$ for some finite separable extension $E/k$.
\end{Lem}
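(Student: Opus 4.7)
The plan is to induct on $\dim X$. First, since $X(F) = X_{\mathrm{red}}(F)$ for any field $F$, we may assume $X$ is reduced. Writing $X = \bigcup_i X_i$ as its finite irreducible decomposition, for each small $\alpha$ some $X_i$ satisfies $X_i(K_\alpha) \neq \emptyset$. Because any finite cover of a cofiltered poset must contain a cofinal piece, one fixed $i$ works cofinally among the small indices; combined with monotonicity ($K_\beta \subseteq K_\alpha$ gives $X_i(K_\beta) \subseteq X_i(K_\alpha)$), we obtain $X_i(K_\alpha) \neq \emptyset$ for all sufficiently small $\alpha$. Replacing $X$ by $X_i$, we may assume $X$ is integral, and we set $L = k(X)$.

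If $L/k$ is separable (in the sense of MacLane), then by generic smoothness there is a dense open $U \subseteq X$ smooth over $k$; any closed point $u \in U$ has $\kappa(u)/k$ finite (by the Nullstellensatz for finite-type algebras) and separable (by smoothness), so $E := \kappa(u)$ does the job.

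If $L/k$ is not separable, pick $\beta \in L \setminus k$ with $c := \beta^p \in k$ (obtained as an appropriate power of any purely inseparable element of $L$ over $k$); necessarily $c \notin k^p$, for otherwise $\beta \in k$. Let $V \subseteq X$ be the regularity locus of $\beta$ (a nonempty dense open, as $\beta$ is regular at the generic point) and $Z := X \setminus V$ its proper closed complement (possibly empty). For any $K_\alpha$-point with image in $V$, the value of $\beta$ lies in $K_\alpha$ and its $p$-th power equals $c$, hence $c \in K_\alpha^p$. Applying the hypothesis to the finite purely inseparable extension $F = k(c^{1/p})$ of degree $p$, we obtain $\alpha_1$ such that $F$ and $K_{\alpha_1}$ are linearly disjoint, which is equivalent to $c \notin K_{\alpha_1}^p$; by monotonicity, $c \notin K_\alpha^p$ for all $\alpha \leq \alpha_1$. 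Choose $\alpha_2 \leq \alpha_0, \alpha_1$ by cofilteredness. For every $\alpha \leq \alpha_2$, we then have $V(K_\alpha) = \emptyset$ and $X(K_\alpha) \neq \emptyset$, forcing $Z(K_\alpha) \neq \emptyset$. If $Z = \emptyset$ (the only possibility in the base case $\dim X = 0$), this contradicts the hypothesis and so this branch cannot arise; otherwise $\dim Z < \dim X$ and we apply the inductive hypothesis to $Z$.

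The main delicate point is the cofiltered-poset pigeonhole and consistently carrying the hypothesis ``$X(K_\alpha) \neq \emptyset$ for all sufficiently small $\alpha$'' through each reduction; the background facts on MacLane separability and generic smoothness of integral schemes of finite type over $k$ are standard.
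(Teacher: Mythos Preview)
Your proof is correct and takes a genuinely different route from the paper's. The paper uses Noetherian induction to reduce to the situation where $X$ is integral and \emph{every} nonempty open $U\subseteq X$ satisfies $U(K_\alpha)\neq\emptyset$ for all small $\alpha$; it then takes a single finite purely inseparable extension $F/k$ for which $(X_F)_{\mathrm{red}}$ is geometrically reduced over $F$, uses the linear-disjointness hypothesis once (for this $F$) to see that $X_F$ is reduced near the image of a $K_\alpha$-point, shrinks $X$ to make it geometrically reduced, and finishes with the same endgame you use (smooth open $\Rightarrow$ $E$-point). Your argument instead inducts on $\dim X$, reduces to $X$ integral via a cofiltered-pigeonhole on irreducible components, and in the inseparable case manufactures a single explicit element $\beta=c^{1/p}\in k(X)$ whose domain of regularity cannot meet any small $K_\alpha$-point, pushing everything into the proper closed pole locus $Z$. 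Your approach is more elementary in that it only ever invokes the hypothesis for degree-$p$ extensions $k(c^{1/p})$ rather than for the full field over which $X$ becomes geometrically reduced; the paper's stronger inductive reduction (``all opens have points'') is a cleaner hypothesis that avoids the component pigeonhole and handles the inseparable case in one stroke. Both arguments rest on the same closing fact that a nonempty smooth finite-type $k$-scheme has a $k^{\mathrm{sep}}$-point.
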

\begin{proof}
By Noetherian induction we may assume that $X$ is integral 
and that for all nonempty opens $U\subseteq X$,
$U(K_\alpha)\neq\emptyset$ for all sufficiently small $K_\alpha$.
We may therefore replace $X$ by any nonempty open subscheme,
and in particular we may assume $X$ regular (cf. \citestacks{07PJ}).

Let $F/k$ be finite purely inseparable such that $(X_F)_\mathrm{red}$ is geometrically reduced over $F$.
%
Take $\alpha$ such that $X(K_\alpha)\neq \emptyset$ and that $K_\alpha$ is linearly disjoint with $F$ over $k$.
Then $K_\alpha\otimes_k F$ is a field and we have a commutative diagram of schemes
\[
\begin{CD}
\Spec(K_\alpha\otimes_k F)@>>> X_F\\
@VVV @VVV\\
\Spec(K_\alpha)@>>> X
\end{CD}
\]
Let $\{x\}$ be the image of the bottom horizontal arrow.
Then the diagram shows that
$X_F\times_X \Spec(\kappa(x))\times_{\Spec(\kappa(x))}\Spec(K_\alpha)=\Spec(K_\alpha\otimes_k F)$
is the spectrum of a field,
thus $X_F\times_X \Spec(\kappa(x))$ is the spectrum of a field.
Since $X$ is regular, 
$X_F$ is regular at preimages of $x$,
and shrinking $X$
we may assume $X_F$ regular, in particular reduced.
Then $X_F$ is geometrically reduced over $F$, 
so $X$ is geometrically reduced over $k$.
But every geometrically reduced
$k$-scheme of finite type contains a nonempty smooth open subscheme, thus an $E$-point for some finite separable $E/k$.
\end{proof}
\begin{Rem}
    In general, it is not always true that $X(k)\neq\emptyset$.
    The following example was suggested by Sean Cotner.
    Let $k=\bF_p$ and let $K$ be the algebraic closure of $k$,
    then for any geometrically connected $X$ and any infinite subextension $K_\alpha$ of $K$ one has $X(K_\alpha)\neq\emptyset$,
    by the Weil conjectures.
    If we take, for example, $\{K_n\}_n$ to be the family of maximal extensions 
    coprime to $n$, where $n$ runs over square-free integers,
    then we see $X(K_n)\neq\emptyset$.
    If $X$ is taken so that $X(k)=\emptyset$,
    for example $X=\bP^1_k\setminus \bP^1_k(k)$,
    then we see the extension $E/k$ is necessary.

    However, one may expect that if $K/k$ is purely inseparable,
    then the extension $E/k$ is not necessary.
    We do not know the truth or falsehood of that statement.
\end{Rem}

\section{Uniform control}\label{sec:Unif}

In this section we define and begin our study on some invariants ``truncated'' from the definition of relative $F$-rational signature.

\begin{Def}\label{def:truncatedSrel}
Let $(R,\fm)$ be a Noetherian local ring, $I_0$ an $\fm$-primary ideal, and $e$ a positive integer.
For an ideal $I$ of $R$ properly containing $I_0$ we set
\[
    \lambda_{I_0}^e(R,I):=\frac{l(\ppower{I}{e}/\ppower{I_0}{e})}{p^{e\dim R}l(I/I_0)}
    =\frac{l(R/\ppower{I_0}{e})-l(R/\ppower{I}{e})}{p^{e\dim R}(l(R/I_0)-l(R/I))};
\]
and we set 
\begin{align*}
    \cS_{I_0}^{e}(R)&:=\{\lambda_{I_0}^e(R,I)\mid \fm I\subseteq I_0\subsetneq I\},\\
s_{I_0}^{e}(R)&:=\min \cS_{I_0}^{e}(R).
\end{align*}

Note that $\cS_{I_0}^{e}(R)$ is contained
in the finite set $[0,l(R/\ppower{I_0}{e})]\cap \frac{1}{p^{e\dim R}l(R/I_0)!}\bZ$,
so the minimum is attained.
\end{Def}

\begin{Lem}
\label{lem:FlatLengthPropotional}
Let $(R,\fm)\to (S,\fn)$ be a flat map of Noetherian local rings such that $\fm S$ is $\fn$-primary.
Let $I_0$ be an $\fm$-primary ideal.

Then for an ideal $I$ of $R$ properly containing $I_0$ and any positive integer $e$,
$IS$ is an ideal of $S$ properly containing $I_0S$,
and $\lambda_{I_0}^e(R,I)=\lambda_{I_0S}^e(S,IS)$.
Consequently, if $\fm S=\fn$, then 
$\cS_{I_0}^{e}(R)\subseteq \cS_{I_0S}^{e}(S)$ and $s_{I_0}^{e}(R)\geq s_{I_0S}^{e}(S)$;
and if $R/I_0\cong S/I_0S$,
then $\cS_{I_0}^{e}(R)= \cS_{I_0S}^{e}(S)$ and $s_{I_0}^{e}(R)= s_{I_0S}^{e}(S)$.
\end{Lem}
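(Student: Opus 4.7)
The plan is to reduce all three statements to two standard features of a flat local extension $(R,\fm)\to(S,\fn)$ with $\fm S$ an $\fn$-primary ideal. First, $\dim S=\dim R$, since flatness gives $\dim S=\dim R+\dim(S/\fm S)=\dim R$. Second, for any $\fm$-primary ideal $\fa$ of $R$ one has the length identity $l_S(S/\fa S)=l_R(R/\fa)\cdot l_S(S/\fm S)$; this follows by induction on $l_R(R/\fa)$ from additivity of length and from the fact that tensoring with the flat module $S$ preserves a composition series of $R/\fa$, with each residue field $\kappa(\fm)$ contributing the factor $l_S(S/\fm S)$. I would also record the trivial identity $\ppower{\fa}{e}S=\ppower{(\fa S)}{e}$, immediate from the generator description of bracket powers, which is what allows Frobenius powers to commute with base change.

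For the first assertion, flatness yields the injection $I/I_0\otimes_R S\hookrightarrow R/I_0\otimes_R S$, which realizes $IS/I_0S$ as an $S$-submodule of $S/I_0S$ of length $l_R(I/I_0)\cdot l_S(S/\fm S)>0$; in particular $I_0S\subsetneq IS$. Feeding $\fa=I_0,\,I,\,\ppower{I_0}{e},\,\ppower{I}{e}$ into the length identity turns both the numerator and the denominator of $s_{I_0S}^e(S,IS)$ into $l_S(S/\fm S)$ times the corresponding quantities for $R$; this common factor cancels, and using $\dim S=\dim R$ we conclude $s_{I_0S}^e(S,IS)=s_{I_0}^e(R,I)$.

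For the set-theoretic statements, assume first $\fm S=\fn$. Then the socle-type condition propagates correctly: if $\fm I\subseteq I_0\subsetneq I$ in $R$, then $\fn(IS)=\fm I\cdot S\subseteq I_0S\subsetneq IS$, so $IS$ is admissible for $\cS_{I_0S}^{e}(S)$, and by the first assertion the inclusion $\cS_{I_0}^{e}(R)\subseteq \cS_{I_0S}^{e}(S)$ follows, giving the reverse inequality on minima. When further $R/I_0\cong S/I_0S$ (which already forces $\fm S=\fn$, since $\fn/I_0S$ corresponds to $\fm/I_0$), the isomorphism restricts to a bijection between ideals $I_0\subsetneq I$ of $R$ with $\fm I\subseteq I_0$ and ideals $I_0S\subsetneq J$ of $S$ with $\fn J\subseteq I_0S$, the correspondence being $I\mapsto IS$ (one checks $IS=I+I_0S$ using $S=R+I_0S$). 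This yields the reverse inclusion and hence equality of the sets and minima.

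No step presents a real obstacle; the whole argument is a length bookkeeping exercise built on flatness. The two small points that deserve care are the commutation $\ppower{\fa}{e}S=\ppower{(\fa S)}{e}$, which is what keeps the Frobenius-powered version of the length identity valid, and the verification that under the isomorphism in the third claim the relevant ideals of $S$ are precisely the ones of the form $IS$ for admissible $I$.
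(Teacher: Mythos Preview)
Your proof is correct and follows essentially the same approach as the paper's: both rest on the commutation $\ppower{\fa}{e}S=\ppower{(\fa S)}{e}$ and the flat length formula $l_S(M\otimes_R S)=l_R(M)\,l_S(S/\fm S)$, after which everything is bookkeeping. The paper records those two facts and declares the rest clear; you simply spell out the details, including the explicit dimension equality and the ideal bijection in the $R/I_0\cong S/I_0S$ case.
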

\begin{proof}
One observes that for all ideals $\fa$ of $R$ and all positive integers $e$, $\ppower{\fa}{e}S=\ppower{(\fa S)}{e}$ by definition.
Moreover, by flatness, $l_S(M\otimes_R S)=l_R(M)l_S(S/\fm S)$ for all $R$-modules $M$ of finite length.
Thus the lemma is clear.
\end{proof}

\begin{Def}\label{def:ConstantControl}
Let $R$ be a Noetherian ring, $P\in\Spec(R)$.
Let $C$ be a positive real number.

We say \emph{$C$ controls $F$-colength differences for $P\in\Spec(R)$} if for all $PR_P$-primary ideals $I_0\subsetneq I$, and all positive integers $e<e'$,
\[
|\lambda_{I_0}^e(R_P,I)-\lambda_{I_0}^{e'}(R_P,I)|\leq Cp^{-e},\label{diffEE'}
\]
and consequently
\[
\left| \lambda_{I_0}^e(R_P,I) -\frac{\ehk{I_0}-\ehk{I}}{l(R_P/I_0)-l(R_P/I)}\right|\leq Cp^{-e}.
\]
We say \emph{$C$ controls $F$-colength differences for $R$} if $C$ controls $F$-colength differences for all $P\in\Spec(R)$.
\end{Def}

The relation between our truncated invariants and the (relative) $F$-rational signature is the following.

\begin{Lem}\label{lem:truncatedandnot}
Let $(R,\fm)$ be a Noetherian local ring whose completion is equidimensional.\footnote{By a result of Ratliff, this is equivalent to that $R$ is equidimensional and universally catenary, see \citetwostacks{0AW4}{0AW6}.
In particular, if $R$ is equidimensional and a homomorphic image of a Cohen-Macaulay ring, then $R^\wedge$ is equidimensional, see \citestacks{00NM}.}

Let $I_0$ be a parameter ideal of $R$, and let $C$ be a real number that controls $F$-colength differences for $\fm\in\Spec(R)$.

Then for all positive integers $e$, we have
\[
    \left| s_{I_0}^e(R) -s_\mathrm{rel}(R)
\right|\leq Cp^{-e}.
\]
\end{Lem}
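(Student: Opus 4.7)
The plan is to compare both $s_{I_0}^e(R)$ and $s_\mathrm{rel}(R)$ to a common quantity, $\min_{I \in \mathcal{I}} s(I_0, I)$, where
\[
s(I_0, I) := \frac{\ehk{I_0}-\ehk{I}}{l(R/I_0)-l(R/I)} \quad \text{and} \quad \mathcal{I} := \{I : \fm I \subseteq I_0 \subsetneq I\}.
\]
The set $\mathcal{I}$ is finite, since an $I \in \mathcal{I}$ corresponds bijectively to a nonzero submodule of the finite-length socle $(I_0 : \fm)/I_0 \subseteq R/I_0$.

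By definition, $s_{I_0}^e(R) = \min_{I \in \mathcal{I}} s_{I_0}^e(R, I)$. Meanwhile, the $F$-colength control hypothesis applied to each $\fm$-primary pair $I_0 \subsetneq I$ with $I \in \mathcal{I}$ gives $|s_{I_0}^e(R, I) - s(I_0, I)| \leq Cp^{-e}$ uniformly over the finite set $\mathcal{I}$. Minimization preserves such a uniform bound, so
\[
\Bigl|s_{I_0}^e(R) - \min_{I \in \mathcal{I}} s(I_0, I)\Bigr| \leq Cp^{-e}.
\]

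The remaining task, and where the equidimensional completion hypothesis enters, is to identify $\min_{I \in \mathcal{I}} s(I_0, I)$ with $s_\mathrm{rel}(R)$. The inequality $s_\mathrm{rel}(R) \leq \min_{I \in \mathcal{I}} s(I_0, I)$ is immediate from the definition of $s_\mathrm{rel}$. For the reverse inequality, I would draw on the analysis in \cite{ST}: a mediant/averaging argument along composition series reduces the infimum in the definition of $s_\mathrm{rel}$ (over all parameter ideals $I_0'$ and all $I' \supsetneq I_0'$) to a minimum over socle extensions of individual $I_0'$'s, and under the equidimensional completion hypothesis (by Ratliff, equivalent to equidimensional and universally catenary) the resulting minimum is shown to be independent of the choice of parameter ideal, including the $I_0$ at hand.

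The main obstacle is this last independence statement; the rest of the proof is term-by-term bookkeeping. Once the identity $s_\mathrm{rel}(R) = \min_{I \in \mathcal{I}} s(I_0, I)$ is in place, the lemma is immediate from the inequality displayed above.
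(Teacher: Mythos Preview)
Your approach is correct and matches the paper's: the paper's proof is a one-line citation of \cite[Corollary 3.7]{ST}, which is exactly the identification $s_\mathrm{rel}(R)=\min_{I\in\mathcal{I}} s(I_0,I)$ that you invoke in your final step (and, as the paper's footnote observes, this is where the formal equidimensionality hypothesis is needed).

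One small correction: your claim that $\mathcal{I}$ is finite is false in general. Ideals in $\mathcal{I}$ correspond to nonzero $\kappa(\fm)$-subspaces of $(I_0:\fm)/I_0$, and there are infinitely many such subspaces once the residue field is infinite and the socle has dimension at least~$2$. Fortunately your argument does not actually need this: the control hypothesis (Definition~\ref{def:ConstantControl}) already gives $|s_{I_0}^e(R,I)-s(I_0,I)|\leq Cp^{-e}$ uniformly over \emph{all} $\fm$-primary pairs $I_0\subsetneq I$, and taking infima of two functions that are uniformly $Cp^{-e}$-close yields infima that are $Cp^{-e}$-close, with no finiteness required.
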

\begin{proof}
This follows from \cite[Corollary 3.7]{ST}.\footnote{The author belives that the mild assumption on $R$ that is necessary for \cite[Theorem 7.15]{HH94} to apply was overseen in the proof of \cite[Corollary 3.7]{ST}. 
One can also apply \cite[Theorem 4.2(c)(d)]{HH94} to get $R$ Cohen-Macaulay when the right hand side in \cite[Corollary 3.7]{ST} is not 0; 
this also requires formal equidimensionality.}
\end{proof}

\begin{Thm}[Polstra]\label{thm:Polstra}
The followings hold.
\begin{enumerate}
    \item\label{ExistsC} If $R$ is essentially of finite type over a Noetherian local ring, then there exists a $C$ that controls $F$-colength differences for $R$.
    
    \item\label{Cgoodforflatextn} If $R\to R'$ is a flat map of Noetherian rings, $C$ controls $F$-colength differences for some $P'\in \Spec(R')$, and $\HT P'=\HT(P'\cap R)$, then $C$ controls $F$-colength differences for $P'\cap R\in \Spec(R)$.
    
    \item\label{CgoodforflatextnGLOBAL}
    If $R\to R'$ is faithfully flat map of Noetherian rings, $C$ controls $F$-colength differences for $R'$, then $C$ controls $F$-colength differences for $R$.
\end{enumerate}
\end{Thm}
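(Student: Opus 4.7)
The plan is to treat part (1) as the substantive input, drawn from Polstra's uniform Hilbert--Kunz bound in \cite{Pol}, and to deduce parts (2) and (3) by formal manipulations using Lemma \ref{lem:FlatLengthPropotional} and the dimension formula for flat extensions.

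For (1), I would invoke Polstra's uniform bound: for $R$ essentially of finite type over a Noetherian local ring, there exists a constant $C_0$ such that for every $P\in\Spec(R)$, every $PR_P$-primary ideal $J$, and every $e\geq 1$,
\[
\left| \frac{l(R_P/\ppower{J}{e})}{p^{e\HT P}} - \ehk{R_P,J} \right| \leq C_0 p^{-e} l_{R_P}(R_P/J).
\]
Apply this to $I_0$ and to $I$, subtract (and use additivity of the Hilbert--Kunz multiplicity along inclusions of colength-finite ideals), and one gets an estimate for the numerator of $s_{I_0}^e(R_P,I)$ whose error term is bounded by a constant multiple of $p^{-e}(l(R_P/I_0)-l(R_P/I))$; dividing by the latter yields the required bound $|s_{I_0}^e(R_P,I)-s_{I_0}^{e'}(R_P,I)|\leq Cp^{-e}$, uniformly in $P$, $I_0$, $I$.

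For (2), let $P=P'\cap R$ and consider the flat local map $R_P\to R'_{P'}$. By the dimension formula for flat maps, $\dim R'_{P'}=\dim R_P + \dim(R'_{P'}/PR'_{P'})$, and the hypothesis $\HT P'=\HT P$ forces $PR'_{P'}$ to be $P'R'_{P'}$-primary. For any pair of $PR_P$-primary ideals $I_0\subsetneq I$, both $I_0R'_{P'}$ and $IR'_{P'}$ are then $P'R'_{P'}$-primary, and Lemma \ref{lem:FlatLengthPropotional} gives
\[
s_{I_0}^e(R_P,I)=s_{I_0R'_{P'}}^e(R'_{P'},IR'_{P'})
\]
for every $e$. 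The control at $P'$ transfers verbatim to control at $P$. For (3), faithful flatness together with going-down (which holds for flat maps) gives, for each $P\in\Spec(R)$, a minimal prime $P'$ of $PR'$ with $P'\cap R=P$ and $\HT P'=\HT P$ (using the dimension formula again), and then (2) applies.

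I expect the only real obstacle to be matching the precise shape of Polstra's estimate in (1). The statement we want needs the error term to be proportional to the \emph{difference} $l(R_P/I_0)-l(R_P/I)$, not to $l(R_P/I_0)+l(R_P/I)$, so one must take some care to extract this from Polstra's theorem (perhaps by applying it to $l(\ppower{I}{e}/\ppower{I_0}{e})$ directly, or to the module $I/I_0$ viewed as a finite length module, rather than to each ideal separately). Parts (2) and (3) are entirely formal once (1) and Lemma \ref{lem:FlatLengthPropotional} are in hand.
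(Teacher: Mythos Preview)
Your treatment of (\ref{Cgoodforflatextn}) and (\ref{CgoodforflatextnGLOBAL}) matches the paper's exactly: Lemma \ref{lem:FlatLengthPropotional} plus the dimension formula, then pick a minimal prime over $PR'$.

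The difference is in (\ref{ExistsC}), and it matters. You invoke Polstra's bound directly for $R$ essentially of finite type over an arbitrary Noetherian local ring. But \cite[Theorem 3.6]{Pol} is stated for $F$-finite rings (the title of \cite{Pol} already signals this), so as written your input is not available in the generality you need. The paper sidesteps this by reversing the order: it proves (\ref{Cgoodforflatextn}) and (\ref{CgoodforflatextnGLOBAL}) first, and only then deduces (\ref{ExistsC}) by passing to a faithfully flat $F$-finite extension of the base local ring (complete and apply the $\Gamma$-construction, or similar), where Polstra's theorem applies, and descending via (\ref{CgoodforflatextnGLOBAL}). This logical reordering is the one genuine idea you are missing.

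Your secondary worry about the shape of the estimate is also real and is another reason the paper's route is cleaner: if you bound $l(R_P/\ppower{I_0}{e})$ and $l(R_P/\ppower{I}{e})$ separately and subtract, the triangle inequality produces an error proportional to $l(R_P/I_0)+l(R_P/I)$, which does not divide away against $l(R_P/I_0)-l(R_P/I)$. One needs a version of Polstra's bound applied to the finite-length module $I/I_0$ itself (which \cite{Pol} does provide in the $F$-finite setting). By contrast, the paper never unpacks Polstra's estimate at all: once (\ref{CgoodforflatextnGLOBAL}) is in hand, (\ref{ExistsC}) is a one-line reduction to the $F$-finite case and a citation.
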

\begin{proof}
(\ref{Cgoodforflatextn}) follows from
Lemma  \ref{lem:FlatLengthPropotional}:
$\lambda_{I_0}^e(R_P,I)=\lambda_{I_0R'_{P'}}^e(R'_{P'},IR'_{P'})$ for all $I_0,I,e$,
thus if $C$ controls $F$-colength differences for $P'\in\Spec(R')$,
then it does for $P\in\Spec(R)$.

Now
(\ref{CgoodforflatextnGLOBAL}) follows from (\ref{Cgoodforflatextn}), since for all $P\in\Spec(R)$, a minimal prime $P'$ of $PR'$ is such that $P'\cap R=P$ and that $\HT P'=\HT P$.
Finally, (\ref{ExistsC}) follows from (\ref{CgoodforflatextnGLOBAL}) and \cite[Theorem 3.6]{Pol} by taking a faithfully flat extension of the local ring that is Noetherian and $F$-finite.
\end{proof}

\section{\'Etale-local extensions}\label{sec:fet}

We use the following trick to show that \'etale-local extensions preserve our invariant $s^e_{I_0}$,
Corollary \ref{cor:etaleSe}.

\begin{Lem}\label{lem:MinSadditive}
Given $R,I_0,e$ as in Definition \ref{def:truncatedSrel}, let $I_1,I_2$ be two ideals that satisfy
$\fm I_1\subseteq I_0\subsetneq I_1$ and $\fm I_2\subseteq I_0\subsetneq I_2$.
If $\lambda^e_{I_0}(R,I_1)=\lambda^e_{I_0}(R,I_2)=s^e_{I_0}(R)$,
then $\lambda^e_{I_0}(R,I_1+I_2)=s^e_{I_0}(R)$.
\end{Lem}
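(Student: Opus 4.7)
The plan is to verify the nontrivial inequality $s^e_{I_0}(R, I_1+I_2) \leq s$, where $s := s^e_{I_0}(R)$. The reverse inequality is automatic from the minimality of $s$, once one checks that $I_1 + I_2$ is a valid candidate in $\cS^e_{I_0}(R)$, which is clear since $\fm(I_1+I_2) \subseteq I_0 \subsetneq I_1 \subseteq I_1 + I_2$. Writing $s^e_{I_0}(R, I_1+I_2) \leq s$ in unclearing denominators form, what needs to be shown is
\[
l\bigl(\ppower{(I_1+I_2)}{e}/\ppower{I_0}{e}\bigr) \leq s\, p^{e\dim R}\, l((I_1+I_2)/I_0).
\]

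The first step is to bring in two identities. Since we are in characteristic $p$, $(a+b)^{p^e} = a^{p^e}+b^{p^e}$, which gives $\ppower{(I_1+I_2)}{e} = \ppower{I_1}{e}+\ppower{I_2}{e}$. Combined with length-additivity along short exact sequences, this yields the inclusion-exclusion formulas
\begin{align*}
l((I_1+I_2)/I_0) &= l(I_1/I_0) + l(I_2/I_0) - l((I_1\cap I_2)/I_0),\\
l\bigl(\ppower{(I_1+I_2)}{e}/\ppower{I_0}{e}\bigr) &= l\bigl(\ppower{I_1}{e}/\ppower{I_0}{e}\bigr) + l\bigl(\ppower{I_2}{e}/\ppower{I_0}{e}\bigr) - l\bigl((\ppower{I_1}{e}\cap \ppower{I_2}{e})/\ppower{I_0}{e}\bigr).
\end{align*}
The hypothesis $s^e_{I_0}(R, I_j) = s$ reads $l(\ppower{I_j}{e}/\ppower{I_0}{e}) = s\,p^{e\dim R}\,l(I_j/I_0)$ for $j=1,2$, so substituting into the above it suffices to prove
\[
l\bigl((\ppower{I_1}{e}\cap \ppower{I_2}{e})/\ppower{I_0}{e}\bigr) \geq s\,p^{e\dim R}\,l((I_1\cap I_2)/I_0).
\]

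The key observation is the trivial inclusion $\ppower{(I_1\cap I_2)}{e} \subseteq \ppower{I_1}{e}\cap \ppower{I_2}{e}$. If $I_1 \cap I_2 = I_0$, the right-hand side of the desired inequality is zero and there is nothing to prove. Otherwise $I_1 \cap I_2$ properly contains $I_0$ and satisfies $\fm(I_1 \cap I_2)\subseteq \fm I_1 \subseteq I_0$, so it is a valid candidate in $\cS^e_{I_0}(R)$, and minimality of $s$ yields $l(\ppower{(I_1\cap I_2)}{e}/\ppower{I_0}{e}) \geq s\,p^{e\dim R}\,l((I_1\cap I_2)/I_0)$. Combined with the inclusion above, this gives the required lower bound. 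The argument is essentially formal once the Frobenius identity and the reuse of $I_1 \cap I_2$ as a candidate are noticed, so there is no real obstacle beyond keeping track of the degenerate case $I_1\cap I_2 = I_0$.
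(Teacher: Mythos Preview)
Your proof is correct and follows essentially the same route as the paper's: both combine inclusion--exclusion for lengths, the containments $\ppower{(I_1+I_2)}{e}\supseteq \ppower{I_1}{e}+\ppower{I_2}{e}$ and $\ppower{(I_1\cap I_2)}{e}\subseteq \ppower{I_1}{e}\cap \ppower{I_2}{e}$, and the minimality of $s$ applied to $I_1\cap I_2$ (with the degenerate case $I_1\cap I_2=I_0$ handled separately). The paper packages this as a single chain of inequalities forced to be equalities, whereas you isolate the one nontrivial inequality and prove it directly; the content is the same.
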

\begin{proof}
Let $a=p^{e\dim R}s^e_{I_0}(R)$. 
Then for all ideals $I$ that contains $I_0$ and satisfies $\fm I\subseteq I_0$,
we have $al(I/I_0)\leq l(\ppower{I}{e}/\ppower{I_0}{e})$,
with equality if and only if $I_0=I$ or $\lambda^e_{I_0}(R,I)=s^e_{I_0}(R)$.
Thus
\begin{align*}
a\left(l(I_1/I_0)+l(I_2/I_0)\right)
    &=l(\ppower{I_1}{e}/\ppower{I_0}{e})+l(\ppower{I_2}{e}/\ppower{I_0}{e})\\
    &=l\left(\frac{\ppower{I_1}{e}+\ppower{I_2}{e}}{\ppower{I_0}{e}}\right)+l\left(\frac{\ppower{I_1}{e}\cap\ppower{I_2}{e}}{\ppower{I_0}{e}}\right)\\
    &\geq l\left(\frac{\ppower{(I_1+I_2)}{e}}{\ppower{I_0}{e}}\right)+l\left(\frac{\ppower{(I_1 \cap I_2)}{e}}{\ppower{I_0}{e}}\right)\\
    &\geq
    a\left(l\left(\frac{I_1+I_2}{I_0}\right)+l\left(\frac{I_1 \cap I_2}{I_0}\right)\right)\\
\end{align*}
Now equality holds everywhere. Since $I_1+I_2\neq I_0$ we see $\lambda^e_{I_0}(R,I_1+I_2)$ is well-defined and equal to $s^e_{I_0}(R)$.
\end{proof}

\begin{Cor}\label{cor:etaleSe}
Let $(R,\fm)\to (S,\fn)$ be an \'etale-local map of Noetherian local rings and let $I_0$ be $\fm$-primary.
Then for all $e$, $s^e_{I_0}(R)=s^e_{I_0S}(S)$.
\end{Cor}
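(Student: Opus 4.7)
The plan is to prove the non-trivial inequality $s^e_{I_0}(R) \leq s^e_{I_0 S}(S)$; the reverse $s^e_{I_0}(R) \geq s^e_{I_0 S}(S)$ is immediate from Lemma \ref{lem:FlatLengthPropotional} since $R \to S$ is flat with $\fm S = \fn$.

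First, I would reduce to the case where $R$ is henselian. Because the fiber of $R \to S$ over $\fm$ is the single field $\kappa(\fn)$, the base change $R^h \otimes_R S$ is local, and is in fact the henselization $S^h$ of $S$. Both $R \to R^h$ and $S \to S^h$ are flat with trivial residue field extension, so Lemma \ref{lem:FlatLengthPropotional} yields $s^e_{I_0}(R) = s^e_{I_0 R^h}(R^h)$ and $s^e_{I_0 S}(S) = s^e_{I_0 S^h}(S^h)$. After replacement, we may assume $R$ is henselian, in which case $S$ is finite étale local over $R$.

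Second, I would pass to a Galois closure at the residue-field level. Let $L/\kappa(\fm)$ be a finite Galois extension containing $\kappa(\fn)$, with Galois group $G$. Lifting the minimal polynomial of a primitive element of $L/\kappa(\fn)$ to a monic $\tilde g \in S[y]$, the ring $S' := S[y]/(\tilde g)$ is finite étale local over $S$ (and hence over $R$) with residue field $L$. Henselianness of $R$ lifts the $G$-action on $L$ uniquely to a $G$-action on $S'$ over $R$, making $S'/R$ a $G$-Galois finite étale cover with $(S')^G = R$; Galois descent then identifies $G$-invariant ideals of $S'$ with extended ideals $IS'$ for $I \subseteq R$.

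The punchline combines Galois averaging with Lemma \ref{lem:MinSadditive}. Take any $J' \subseteq S'$ attaining $s^e_{I_0 S'}(S')$; every translate $\sigma(J')$, $\sigma \in G$, also attains this minimum, since $\sigma$ is an $R$-algebra automorphism of $S'$ that commutes with Frobenius and preserves both $I_0 S'$ and all $S'$-module lengths. Iterating Lemma \ref{lem:MinSadditive}, $J'' := \sum_{\sigma \in G} \sigma(J')$ remains a minimizer and is $G$-invariant, so $J'' = IS'$ for some ideal $I \subseteq R$, and faithful flatness of $R \to S'$ forces $\fm I \subseteq I_0 \subsetneq I$. Applying Lemma \ref{lem:FlatLengthPropotional} to $R \to S'$ and then to $S \to S'$ yields
\[
s^e_{I_0}(R, I) \;=\; s^e_{I_0 S'}(S', IS') \;=\; s^e_{I_0 S'}(S') \;\leq\; s^e_{I_0 S}(S),
\]
hence $s^e_{I_0}(R) \leq s^e_{I_0 S}(S)$. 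The main obstacle is constructing the $R$-linear Galois cover $S'$ and the canonical $G$-action on it, which is precisely what the reduction to henselian $R$ makes possible.
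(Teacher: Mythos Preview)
Your proof is correct and shares the paper's key idea: pass to a finite \'etale Galois cover, use Lemma~\ref{lem:MinSadditive} to average a minimizer over the Galois group, then descend the resulting $G$-invariant ideal to $R$. The difference lies in how you reach the Galois situation. The paper first replaces $R,S$ by $R/\ppower{I_0}{e}$ and $S/\ppower{(I_0S)}{e}$, making them Artinian; then $R\to S$ is automatically finite \'etale, and a compatible choice of coefficient fields gives $S\cong R\otimes_k l$ with $l/k$ finite separable, after which one simply enlarges $l$ to a Galois extension and lets $G$ act on the second tensor factor. You instead henselize $R$ (using that $R^h\otimes_R S=S^h$), which forces $R\to S$ to be finite \'etale, and then build the Galois cover $S'$ via Hensel's lemma and the equivalence between finite \'etale $R$-algebras and finite separable extensions of $\kappa(\fm)$. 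The paper's route is a bit shorter and more elementary---modding out by an ideal is cheaper than invoking henselization and the \'etale dictionary---but your approach is perfectly valid and has the mild advantage of never leaving the category of Noetherian local rings of the original dimension.
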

\begin{proof}
We may replace $R$ and $S$ by $R/\ppower{I_0}{e}$ and $S/\ppower{(I_0S)}{e}$ to assume $R$ and $S$ Artinian,
so $R\to S$ is finite \'etale.
There is a compatible choice of coefficient fields of $R$ and $S$ (see for example \cite[(28.J) Theorem 60]{Matsumura}),
and thus
$S\cong R\otimes_k l$ where $l/k$ is finite separable and $k$ is a coefficient field of $R$.
By Lemma  \ref{lem:FlatLengthPropotional} we may enlarge $l$ and assume $l/k$ finite Galois with group $G$,
so $G$ acts on $S=R\otimes_k l$ on the second factor.

Let $J$ be an ideal of $S$ such that $\fm J \subseteq I_0S\subsetneq J$ and that $\lambda^e_{I_0S}(S,J)=s^e_{I_0S}(S)$.
Then $\sum_{g\in G} g(J)$ satisfies the same by Lemma \ref{lem:MinSadditive} and is of the form $IS$ where $I$ is an ideal of $R$ (cf. \citestacks{0CDR}).
By flatness, $\fm I\subseteq I_0\subsetneq I$.
Our equality now follows from Lemma  \ref{lem:FlatLengthPropotional}.
\end{proof}

\section{\texorpdfstring{$\Gamma$}{Gamma}-construction and the truncated invariants}\label{sec:TruncatedGamma}

The goal of this section is to prove the following.
Consequences will be discussed in Section \ref{sec:mainThms}.

\begin{Thm}\label{thm:GammaSe}
Let $A$ be a Noetherian complete local ring, $k$ a coefficient field of $A$, and $\Lambda$ a $p$-basis of $k$.
Let $R$ be a finite type $A$-algebra.
Let $Q\in\Spec(R)$ and let $I_0$ be a $QR_Q$-primary ideal of $R_Q$. 
Let $e$  be a positive integer.

Then there exists a cofinite subset $\Gamma_0=\Gamma_0(Q,I_0,e)$ of
$\Lambda$ such that for all cofinite subsets $\Gamma$ of $\Gamma_0$,
$s^{e}_{I_0}(R_Q)=s^{e}_{I_0R^\Gamma_{Q^\Gamma}}(R^\Gamma_{Q^\Gamma})$.
\end{Thm}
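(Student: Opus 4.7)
The plan is to combine Lemma \ref{lem:FlatLengthPropotional}, a parametrization of the candidate ideals by Grassmannians over $\kappa(Q)$, Lemma \ref{lem:AllGammaEpoint}, and Corollary \ref{cor:etaleSe}. Write $B = R_Q$, $\kappa = \kappa(Q)$, and $B^\Gamma = R^\Gamma_{Q^\Gamma}$. For sufficiently small $\Gamma$, Lemma \ref{lem:GammaFacts}(\ref{QRGammaPrime}) yields $Q^\Gamma = QR^\Gamma$, hence $B \to B^\Gamma$ is a faithfully flat local map with $\fm B^\Gamma = \fm_{B^\Gamma}$, and Lemma \ref{lem:FlatLengthPropotional} gives the easy inequality $s^e_{I_0}(B) \geq s^e_{I_0 B^\Gamma}(B^\Gamma)$. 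The remaining task is to establish the reverse inequality, which amounts to descending a minimum-realizing ideal from $B^\Gamma$ back to $B$, up to a finite separable residue-field extension that is harmless by Corollary \ref{cor:etaleSe}.

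Set $V := (I_0 :_B \fm)/I_0$, a finite-dimensional $\kappa$-vector space of dimension $d$. Ideals $I \subseteq B$ with $\fm I \subseteq I_0 \subsetneq I$ correspond bijectively to nonzero $\kappa$-subspaces $W = I/I_0 \subseteq V$, and $l(I/I_0) = \dim_\kappa W$; the same parametrization works over any flat local extension $B_{\kappa'}$ of $B$ with residue field $\kappa' \supseteq \kappa$ (the relevant vector space becoming $V \otimes_\kappa \kappa'$), and by Lemma \ref{lem:FlatLengthPropotional} the length $\ell(W) := l(\ppower{I}{e}/\ppower{I_0 B_{\kappa'}}{e})$ depends only on $W$, not on the specific choice of $B_{\kappa'}$. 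Hence $s^e_{I_0 B_{\kappa'}}(B_{\kappa'}) = \min_{1 \leq r \leq d} \ell^{\kappa'}_r/(rp^{e\dim B})$, where $\ell^{\kappa'}_r := \min\{\ell(W) : W \in \mathrm{Gr}(r,V)(\kappa')\}$.

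To turn $\ell$ into a semi-continuous function on an algebraic parameter scheme defined over $\kappa$, I would use the Frobenius twist. Pick a coefficient field $\kappa \hookrightarrow M := B/\ppower{I_0}{e}$, which exists by Cohen's theorem since $M$ is Artinian local of characteristic $p$; the $p^e$-th power map $V \to M$ then linearizes to a $\kappa$-linear map $\tilde\phi: V^{(p^e)} \to M$. On $G_r := \mathrm{Gr}(r, V^{(p^e)})_\kappa$ the universal subbundle $\mathcal{W}$ induces a map of coherent sheaves $M \otimes_\kappa \mathcal{W} \to M \otimes_\kappa \mathcal{O}_{G_r}$ sending $m \otimes w$ to $m \cdot \tilde\phi(w)$, and the fiber-dimension of its image $\mathcal{J}$ equals $\ell$ evaluated at the corresponding subspace. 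By the standard upper semi-continuity of cokernel rank, this fiber-dimension is a lower semi-continuous function on $G_r$, so for each integer $n$ the locus $Z_{r,n} := \{t \in G_r : \dim_{\kappa(t)} \mathcal{J}_t \leq n\}$ is a closed subscheme of $G_r$ defined over $\kappa$. The integer $\ell^{\kappa(Q^\Gamma)}_r$ is bounded above by $\dim_\kappa M$ and weakly increasing as $\Gamma$ shrinks (smaller $\Gamma$ gives a smaller residue field and hence fewer available subspaces), so it is eventually constant at some value $\ell^*_r$, and $Z_{r,\ell^*_r}(\kappa(Q^\Gamma))$ is nonempty for all sufficiently small $\Gamma$. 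Lemma \ref{lem:GammaFacts}(\ref{FoverKQLinearDisjt}) supplies the linear-disjointness hypothesis of Lemma \ref{lem:AllGammaEpoint}, which then yields a finite separable extension $E_r/\kappa$ with $Z_{r,\ell^*_r}(E_r) \neq \emptyset$.

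Let $E$ be the compositum of $E_1, \ldots, E_d$ (still finite separable over $\kappa$), and let $B \to B^{\mathrm{et}}$ be an \'etale-local extension with residue field $E$, obtained by lifting a primitive element via Hensel's lemma. The $E$-points of the various $Z_{r,\ell^*_r}$ translate, back in $B^{\mathrm{et}}$, into ideals realizing $\ell(W) \leq \ell^*_r$ for each $r$, so $s^e_{I_0 B^{\mathrm{et}}}(B^{\mathrm{et}}) \leq \min_r \ell^*_r/(rp^{e\dim B}) = s^e_{I_0 B^\Gamma}(B^\Gamma)$ for all small $\Gamma$. Corollary \ref{cor:etaleSe} identifies $s^e_{I_0}(B)$ with $s^e_{I_0 B^{\mathrm{et}}}(B^{\mathrm{et}})$, and combining this with the first paragraph delivers the desired equality. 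I expect the main technical hurdle to be verifying that $Z_{r,n}$ is genuinely defined over $\kappa$ and that its $\kappa'$-points correspond, for every field $\kappa' \supseteq \kappa$, to subspaces $W$ with $\ell(W) \leq n$ — this reduces to flat base-change compatibility (Lemma \ref{lem:FlatLengthPropotional}) together with the observation that the auxiliary choice of coefficient field of $M$ does not affect the intrinsic invariant being computed.
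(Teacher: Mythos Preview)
Your overall architecture matches the paper's: parametrize socle ideals over $\kappa(Q)$, show the minimum-realizing locus has $\kappa(Q^\Gamma)$-points for all small $\Gamma$, invoke Lemma~\ref{lem:AllGammaEpoint} to produce an $E$-point for some finite separable $E/\kappa(Q)$, and finish with Corollary~\ref{cor:etaleSe}. The gap is precisely the ``main technical hurdle'' you flag at the end, and it is not closed by the reduction you propose.

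Your scheme $Z_{r,n}$ is built from a coefficient field $\sigma:\kappa\hookrightarrow M=R_Q/\ppower{I_0}{e}$, and for its $\kappa(Q^\Gamma)$-points to detect $\ell(W)\le n$ as computed in $B^\Gamma/\ppower{(I_0B^\Gamma)}{e}$ you need $M\otimes_{\sigma,\kappa}\kappa(Q^\Gamma)\cong B^\Gamma/\ppower{(I_0B^\Gamma)}{e}$ as $\kappa(Q^\Gamma)$-algebras, compatibly with the $p^e$-th power map. That would require $\sigma$ to extend to a coefficient field $\kappa(Q^\Gamma)\hookrightarrow B^\Gamma/\ppower{(I_0B^\Gamma)}{e}$, and in general it does not. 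The paper names this obstruction explicitly: $R_Q/\ppower{I_0}{e}$ and $R^\Gamma_{Q^\Gamma}/\ppower{(I_0R^\Gamma_{Q^\Gamma})}{e}$ may not admit compatible coefficient fields. Lemma~\ref{lem:FlatLengthPropotional} tells you the intrinsic length $\ell(W)$ is preserved under flat base change, but it does not identify $M\otimes_\kappa\kappa(Q^\Gamma)$ with $B^\Gamma/\ppower{(I_0B^\Gamma)}{e}$, which is what your sheaf-theoretic construction of $Z_{r,n}$ silently uses when you evaluate the fibre of $\mathcal{J}$ at a $\kappa(Q^\Gamma)$-point.

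The paper's fix is to choose the coefficient field not in $M$ but in $B/\ppower{(I_0B)}{e}$, where $B$ is the \emph{strict Henselization} of $R^{\Gamma_1}_{Q^{\Gamma_1}}$ for a fixed small $\Gamma_1$; this single $B$ receives flat local maps from every ring of interest. The price is that the structure constants $c_{jk},d_{klh}$ now lie in the residue field $k_B\supsetneq\kappa(Q)$, so the rank loci are a priori defined only over the finite extension $L=\kappa(Q)[c_{jk},d_{klh}]$. Lemma~\ref{lem:SeDefinable} then manufactures a $\kappa(Q)$-defined locally closed set $\cC_s$ with an asymmetric conclusion: for arbitrary $k_B$-points membership gives only $f\le s$, while equivalence with $f=s$ holds over subfields linearly disjoint from $L$. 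One shrinks $\Gamma$ further via Lemma~\ref{lem:GammaFacts}(\ref{FoverKQLinearDisjt}) to guarantee that disjointness before feeding $\cC_s$ into Lemma~\ref{lem:AllGammaEpoint}. This entire layer is the substance of the argument and is missing from your proposal.
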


Before we go into the argument, let us discuss the idea of the proof.
To give an ideal $I$ with $\fm I\subseteq I_0\subsetneq I$
is the same as to give (an equivalence class of) a non-zero $n$-by-$n$ matrix if we fix a basis of the $n$-dimensional $\kappa(Q)$-vector space $(I_0:_{R_Q}Q)/I_0$.
The corresponding invariant is then ``algebraically computed''
by the entries of the matrix.
The set where the invariant takes a certain value should then be constructible, 
and if a constructible subset of the variety of matrices has points over small $\kappa(Q^\Gamma)$, then it ``almost'' has a $\kappa(Q)$-point, see Lemma  \ref{lem:AllGammaEpoint}.
However, 
we run into the problem that $R_Q/\ppower{I_0}{e}$ and $R^\Gamma_{Q^\Gamma}/\ppower{(I_0R^\Gamma_{Q^\Gamma})}{e}$ may not have compatible coefficient fields.
We remedy this by choosing and fixing a coefficient field in a ring $B$ that dominates all those we shall consider,
and showing a weaker constructibility result (Lemma  \ref{lem:SeDefinable}).\\

Now we begin to prove the theorem.
We fix $A,k,\Lambda,R,Q,I_0,e$ as in the theorem. Let $d=\HT Q$. 

\subsection{Parametrization of socle ideals}\label{subsec:parameterizeIdeals}
Choose and fix $\epsilon_1,\ldots,\epsilon_n\in (I_0:_{R_Q} Q)$ that map to a basis of $(I_0:_{R_Q} Q)/I_0$.

Let $R_Q\to D$ be a flat map of local rings such that $QD$ the maximal ideal of $D$.
Then
$\epsilon_1,\ldots,\epsilon_n$ are in $(I_0D:_{D} Q)$ and map to a basis of $(I_0D:_{D} Q)/I_0D$.
Let $a_{ij}\ (1\leq i,j\leq n)$ be elements of the residue field $D/QD$,
and let $\widetilde{a_{ij}}\in D$ be arbitrary lifts.
Then the ideal
\[
J=J(D;a_{ij}\ (1\leq i,j\leq n)):=I_0D+\left(\sum_j \widetilde{a_{ij}}\epsilon_j\mid 1\leq i\leq n\right)
\]
is independent of the choice of $\widetilde{a_{ij}}$ and $QJ\subseteq I_0\subseteq J$.
We have $I_0\neq J$ if and only if not all $a_{ij}$ are zero.
Therefore we get a map
\[
J(D,-)\colon(\bA^{n^2}_{\kappa(Q)}\setminus \{0\})(D/QD)\to \{J\subseteq D\mid QJ\subseteq I_0D\subsetneq J\}.
\]
This map is surjective since $\epsilon_1,\ldots,\epsilon_n$ map to a basis of $(I_0D:_{D} Q)/I_0D$.

If $D\to D'$ is another flat map of local rings with $QD'$ the maximal ideal of $D'$,
then we have a commutative diagram
\[
\begin{CD}
(\bA^{n^2}_{\kappa(Q)}\setminus \{0\})(D/QD)@>{J(D;-)}>> \{J\subseteq D\mid QJ\subseteq I_0D\subsetneq J\}\\
@VVV @VV{J\mapsto JD'}V\\
(\bA^{n^2}_{\kappa(Q)}\setminus \{0\})(D'/QD')@>{J(D';-)}>> \{J\subseteq D'\mid QJ\subseteq I_0D'\subsetneq J\}
\end{CD}
\]

\subsection{Expressing the length using the entries of the matrix}
If we choose a coefficient field of $R_Q/\ppower{I_0}{e}$
and then a basis of this ring as a vector space over the chosen coefficient field,
then we can write out elements explicitly and calculate the length
$l(\ppower{J}{e}/\ppower{I_0}{e})$
as a constructible function of $a_{ij}$, where $J=J(R_Q;a_{ij})$, as explained below.
The same can be done for each $R^\Gamma_{Q^\Gamma}$.
However, if the choice of coefficient fields is not compatible,
then the constructible functions are also not compatible.
We get around this issue as follows.

Choose and fix a cofinite subset $\Gamma^*$ of $\Lambda$ such that for all $\Gamma\subseteq \Gamma^*$,
$QR^\Gamma$ is prime (Lemma  \ref{lem:GammaFacts}(\ref{QRGammaPrime})).
%
Let $B$ be the strict Henselization of $R^{\Gamma^*}_{Q^{\Gamma^*}}.$
Then $(B,QB)$ is a Noetherian Henselian local ring with $k_B:=B/QB$ separably closed.
Note that $k_B$ is 
a separable closure of $\kappa(Q^{\Gamma^*})$,
thus algebraic over $\kappa(Q)$ by Lemma \ref{lem:GammaFacts}(\ref{PiGammaUnivHomeo})
and generalities of universal homeomorphisms, see for example \citestacks{01S4}.


Choose and fix a coefficient field $\sigma\colon k_B\to B/\ppower{(I_0B)}{e}$.
We remind the reader again that $k_B$ does not necessarily contain a coefficient field of $R/\ppower{I_0}{e}$.
Choose and fix a basis $\epsilon'_1,\ldots,\epsilon_m'$ of $B/\ppower{(I_0B)}{e}$
over this coefficient field.
We may therefore write
\[
\epsilon_i^{p^e}=\sum_{k} \sigma(c_{ik})\epsilon'_k
\]
where $c_{ik}\in k_B$.
If we change the choice of $\sigma$ and $\underline{\epsilon'}$ then $c_{ik}$ may change.
Next, write
\[
\epsilon'_k\epsilon'_l=\sum_{h} \sigma(d_{klh})\epsilon'_h.
\]
Again, the choice of $\sigma$ and $\underline{\epsilon'}$ may affect $d_{klh}$.

For $a_1,\ldots,a_n\in k_B$,
and any lifts $\widetilde{a_i}\in B$,
one has
\[
\left(\sum_i \widetilde{a_i}\epsilon_i\right)^{p^e}=\sum_{k} \sigma\left(\sum_i a_i^{p^e} c_{ik}\right)\epsilon'_k\in B/\ppower{(I_0B)}{e}
\]
This is because $\widetilde{a_i}-\sigma(a_i)\in QB$,
so $\left(\widetilde{a_i}-\sigma(a_i)\right)\epsilon_i\in I_0B$
and
$\widetilde{a_i}^{p^e}\epsilon_i^{p^e}-\sigma(a_i)^{p^e}\epsilon_i^{p^e}\in \ppower{(I_0B)}{e}$.

Now let $M=(a_{ij})$ be a nonzero $n$-by-$n$ $k_B$-matrix,
and consider the ideal $J=J(B;a_{ij})$ as in (\ref{subsec:parameterizeIdeals}).
$J$ is generated by $I_0B$ and $\sum_j \widetilde{a_{ij}}\epsilon_j\ (1\leq i\leq n)$,
where $\widetilde{a_{ij}}\in B$ are arbitrary lifts of $a_{ij}\in k_B$.
Therefore the ideal $\ppower{J}{e}/\ppower{(I_0B)}{e}$ of $B/\ppower{(I_0B)}{e}$
is spanned, as a $\sigma(k_B)$-vector space,
by the elements
\[\left(\sum_j\widetilde{a_{ij}}\epsilon_j\right)^{p^e}\epsilon'_l\]
where $1\leq i\leq n,1\leq l\leq m$.
By previous calculations,
\begin{align*}
\left(\sum_{j=1}^n\widetilde{a_{ij}}\epsilon_j\right)^{p^e}\epsilon'_l&=\sum_{k=1}^m \sigma\left(\sum_{j=1}^n a_{ij}^{p^e} c_{jk}\right)\epsilon'_k\epsilon'_l\\
&=\sum_{h=1}^m \sigma\left(\sum_{k=1}^m\sum_{j=1}^n a_{ij}^{p^e} c_{jk}d_{klh}\right)\epsilon'_h
\end{align*}

Let
$M'$ be the $m$-by-$mn$ matrix whose $(n(l-1)+i)$th column is the vector
$\left(\sum_{k=1}^m\sum_{j=1}^n a_{ij}^{p^e} c_{jk}d_{klh}\right)_{1\leq h\leq m}$.
Consider the function $f\colon M\mapsto \frac{\rank M'}{p^{ed}\rank M}$.
We have $f(M)=\lambda^e_{I_0B}(B,J)$.

\subsection{Constructibility}
We now know that for $M\in (\bA^{n^2}_{\kappa(Q)}\setminus \{0\})(k_B)$,
\[\lambda^e_{I_0B}(B,J(B;M))=\frac{\rank M'}{p^{ed}\rank M}=f(M).\]
Note that the rank of a matrix is detected by the (non-)vanishing of minors, 
and minors are polynomials of the entries of the matrix.
The problem now is that the entries of $M'$, being polynomials of the entries of $M$ with $k_B$-coefficients, 
are not necessarily polynomials with $\kappa(Q)$-coefficients.
We have the following remedy.

\begin{Lem}\label{lem:SeDefinable}
%
Let $L=\kappa(Q)[c_{jk},d_{klh}]$, a finite extension of $\kappa(Q)$.

For each $s\in \bQ$, there is a locally closed subset $\cC_s\subseteq \bA^{n^2}_{\kappa(Q)}\setminus \{0\}$ with the following properties.

\begin{enumerate}
    \item \label{SeLessIfinC} If $M=(a_{ij})\in \cC_s(k_B)$, then $f(M)\leq s$; and
    \item \label{SeDefinedByC} If $M=(a_{ij})\in (\bA^{n^2}_{\kappa(Q)}\setminus \{0\})(k')$, where $k'$ is a subextension of $k_B/\kappa(Q)$ linearly disjoint with $L$ over ${\kappa(Q)}$, then $f(M)=s$ if and only if $M\in \cC_s(k')$.
\end{enumerate}
\end{Lem}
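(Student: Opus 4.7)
The plan is to construct $\cC_s$ stratum-by-stratum, indexed by pairs $(r, r')$ of non-negative integers with $r'/(p^{ed} r) = s$; the integer $r$ will record $\rank M$, while $r'$ will record $\rank M'$ after descending the relevant $L$-coefficient rank conditions to $\kappa(Q)$.

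Define $U_r \subseteq \bA^{n^2}_{\kappa(Q)}$ as the open locus $\{\rank M \geq r\}$, cut out by the non-simultaneous vanishing of the $r \times r$ minors of the generic matrix $(T_{ij})$; since those minors have $\kappa(Q)$-coefficients, $U_r$ is defined over $\kappa(Q)$. For $M'$ the situation is different: each $(r'+1) \times (r'+1)$ minor $q$ of $M'$ is a polynomial in $L[T_{ij}]$. Fixing a $\kappa(Q)$-basis $1 = \lambda_0, \ldots, \lambda_{N-1}$ of $L$, write $q = \sum_i \lambda_i P_i$ with $P_i \in \kappa(Q)[T_{ij}]$ and define $V^{\kappa(Q)}(q) := V(P_0, \ldots, P_{N-1}) \subseteq \bA^{n^2}_{\kappa(Q)}$; the ideal $(P_0, \ldots, P_{N-1})$ of $\kappa(Q)[T_{ij}]$ is easily seen to be independent of the basis choice, since a change of basis acts on the $P_i$ by an invertible $\kappa(Q)$-linear transformation. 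Let $W_{r'}$ be the intersection of all $V^{\kappa(Q)}(q)$ as $q$ ranges over the $(r'+1) \times (r'+1)$ minors of $M'$. Then $W_{r'}$ is closed in $\bA^{n^2}_{\kappa(Q)}$ and enjoys two key properties: for any $M \in W_{r'}(k_B)$ we have $q(M) = \sum_i \lambda_i P_i(M) = 0$, whence $\rank M' \leq r'$; and for any $k' \subseteq k_B$ linearly disjoint with $L$ over $\kappa(Q)$, the $k'$-linear independence of $\lambda_0, \ldots, \lambda_{N-1}$ inside $L \otimes_{\kappa(Q)} k'$ upgrades this to the biconditional $M \in W_{r'}(k') \Leftrightarrow \rank M' \leq r'$.

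With these ingredients in hand I would set
\[
\cC_s := \bigcup_{\substack{r \geq 1,\ r' \geq 0 \\ r'/(p^{ed} r) = s}} (U_r \setminus U_{r+1}) \cap (W_{r'} \setminus W_{r'-1}),
\]
a finite disjoint union of locally closed subsets of $\bA^{n^2}_{\kappa(Q)} \setminus \{0\}$ (the union is finite because $r \leq n$ and $r' \leq m$). Property (\ref{SeLessIfinC}) is then immediate: any $M \in \cC_s(k_B)$ lies in some stratum with $\rank M = r$ and $\rank M' \leq r'$, giving $f(M) \leq r'/(p^{ed} r) = s$. For property (\ref{SeDefinedByC}), over a $k'$ linearly disjoint with $L$ the biconditional above promotes $M \in W_{r'} \setminus W_{r'-1}$ to the statement $\rank M' = r'$ exactly, so the stratum containing $M$ is pinned down by the genuine pair $(\rank M, \rank M')$, and $M \in \cC_s(k')$ if and only if this pair satisfies $r'/(p^{ed} r) = s$, i.e.\ $f(M) = s$.

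The main obstacle I foresee is the mild terminological tension between ``locally closed subset'' in the statement and the finite disjoint union of locally closed pieces produced by the construction; the correct reading is as a constructible subset (or equivalently, a reduced subscheme of finite type over $\kappa(Q)$), which is in any case all that the downstream application of Lemma \ref{lem:AllGammaEpoint} requires. A secondary, purely bookkeeping concern is to check carefully that both $U_r \setminus U_{r+1}$ and $W_{r'} \setminus W_{r'-1}$ are genuinely defined over $\kappa(Q)$ (the first by construction, the second by the basis-independence of $V^{\kappa(Q)}(q)$).
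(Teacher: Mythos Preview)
Your proposal is correct and follows essentially the same approach as the paper: both decompose the $L$-coefficient minors of $M'$ along a $\kappa(Q)$-basis of $L$ and use linear disjointness to recover the exact rank condition over $k'$, while only the upper bound survives over $k_B$. The paper first stratifies by $\rank M$ and then treats $\rank M'$ separately, whereas you carry out both stratifications simultaneously; your remark about ``locally closed'' versus ``constructible'' is well taken and applies equally to the paper's reassembled strata.
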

\begin{proof}
We can stratify $\bA^{n^2}_{\kappa(Q)}\setminus\{0\}$ according to rank, so 
it suffices to prove the statement for the function $g(M)=\rank M'$.
We may assume $s\in \bZ$, $0\leq s\leq m$, otherwise we can take $\cC_s=\emptyset$.

Fix $\bZ$-polynomials $G_1,\ldots,G_u,G_{u+1},\ldots,G_{u+v}$ of $n^2+mn+m^3$ variables such that $G_1(a_{ij},c_{jk},d_{klh}),\ldots,G_u(a_{ij},c_{jk},d_{klh})$ are the $s$ by $s$ minors of $M'$ and that $G_{u+1}(a_{ij},c_{jk},d_{klh}),\ldots,G_{u+v}(a_{ij},c_{jk},d_{klh})$ are the $s+1$ by $s+1$ minors of $M'$.
Here $u=\binom{m}{s}\binom{mn}{s}$ and $v=\binom{m}{s+1}\binom{mn}{s+1}$.

Let $\lambda_1,\ldots,\lambda_w$ be a basis of $L$ over $\kappa(Q)$.
Then for each $r\leq u+v$ we can find $k$-polynomials $G_{r1},\ldots,G_{rw}$ of $n^2$ variables such that
\[
G_r(X_{ij},c_{jl},d_{klh})=\lambda_1G_{r1}(X_{ij})+\ldots+ \lambda_wG_{rw}(X_{ij}),
\]
by writing every monomial of $c_{jk},d_{klh}$ as a $k$-linear combination of $\lambda_1,\ldots,\lambda_w$.

Finally, set
\[
\cC_s=\left(\bigcup_{1\leq r\leq u,1\leq p\leq w}D(G_{rp})\right)\cap\bigcap_{u+1\leq r\leq u+v,1\leq p\leq w}V(G_{rp}).
\]
Here, $V(H)$ is the vanishing set of a polynomial $H$ and $D(H)$ is the complement of $V(H)$.
For $M=(a_{ij})\in\cC_s(k_B)$, all $(s+1)$-by-$(s+1)$ minors of $M'$ are zero since $G_{rp}(M)=0$ for all $u+1\leq r\leq u+v$ and all $p$, so $\rank M'\leq s$.
If $k'$ is linearly disjoint with $L$ over $\kappa(Q)$, then $\lambda_1,\ldots,\lambda_w$ are linearly independent over $k'$,
so $M\in \cC_s(k')$ if and only if all $(s+1)$-by-$(s+1)$ minors of $M'$ are zero and some $s$-by-$s$ minor of $M'$ is nonzero, that is, $\rank M'=s$.
\end{proof}

\subsection{Conclusion of the proof}
Choose and fix a cofinite subset $\Gamma^\dagger$ of
$\Gamma^*$ such that $\kappa(Q^\Gamma)$ is linearly disjoint with the field $L$ in Lemma \ref{lem:SeDefinable} 
over $\kappa(Q)$ for all $\Gamma\subseteq \Gamma^\dagger$
(Lemma  \ref{lem:GammaFacts}(\ref{FoverKQLinearDisjt})). 
We now take $\Gamma_0$ to be any cofinite subset of $\Gamma^\dagger$ such that for all $\Gamma\subseteq\Gamma_0$,
the finite sets $\cS_{I_0R^\Gamma_{Q^\Gamma}}^{e}(R^\Gamma_{Q^\Gamma})\subseteq [0,l(R_Q/\ppower{I_0}{e})]\cap \frac{1}{p^{ed}l(R_Q/I_0)!}\bZ$
are all the same, which we denote by $\cS$.
This is possible because for $\Gamma\subseteq\Gamma'\subseteq\Gamma^\dagger$,
$\cS_{I_0R^\Gamma_{Q^\Gamma}}^{e}(R^\Gamma_{Q^\Gamma})\subseteq \cS_{I_0R^{\Gamma'}_{Q^{\Gamma'}}}^{e}(R^{\Gamma'}_{Q^{\Gamma'}})$
by Lemma \ref{lem:FlatLengthPropotional}.
Let $s=\min \cS$,
so $s=s^e_{I_0R^{\Gamma}_{Q^{\Gamma}}}(R^{\Gamma}_{Q^{\Gamma}})$
for all $\Gamma\subseteq\Gamma_0$.
We will show $s^e_{I_0}(R_Q)=s$,
finishing the proof.
Note that $s^e_{I_0}(R_Q)\geq s$ by Lemma \ref{lem:FlatLengthPropotional},
so it suffices to show $s^e_{I_0}(R_Q)\leq s$.

Let $\cC_s$ be as in Lemma \ref{lem:SeDefinable}.
For each $\Gamma\subseteq\Gamma_0$,
since $s\in \cS$,
there exists an ideal $J$ of $R^\Gamma_{Q^\Gamma}$
such that $QJ\subseteq I_0R^\Gamma_{Q^\Gamma}\subsetneq J$
and that $\lambda_{I_0R^\Gamma_{Q^\Gamma}}^{e}(R^\Gamma_{Q^\Gamma},J)=s$.
$J$ is of the form $J(R^\Gamma_{Q^\Gamma},M)$ 
for some $M\in (\bA^{n^2}_{\kappa(Q)}\setminus \{0\})(\kappa(Q^\Gamma))$,
see (\ref{subsec:parameterizeIdeals}).
$M$ 
is then in $\cC_s(\kappa(Q^\Gamma))$ by Lemma \ref{lem:FlatLengthPropotional}
and Lemma \ref{lem:SeDefinable}(\ref{CoeffContainSmallGamma}).
Thus we have $\cC_s(\kappa(Q^\Gamma))\neq\emptyset$ for all $\Gamma\subseteq\Gamma_0$.
Lemma \ref{lem:AllGammaEpoint} applies by Lemma  \ref{lem:GammaFacts}(\ref{FoverKQLinearDisjt}), so we can find a finite separable extension $E/\kappa(Q)$ such that $\cC_s(E)\neq\emptyset$.

Fix an embedding $E\to k_B$, possible as $k_B$ separably closed.
Let $R_Q\to S$ be an arbitrary \'etale-local map whose closed fiber is $\Spec(E)$.
Then since $B$ is Henselian, $S$ maps uniquely into $B$
compatible with the chosen $E\to k_B$.
This makes $B$ a local $S$-algebra,
and $S\to B$ is flat by for example \citestacks{00MK}.
A point $M\in \cC_s(E)$ gives rise of an ideal $J=J(S;M)$ with $Q
J\subseteq I_0S\subsetneq J$,
see (\ref{subsec:parameterizeIdeals}).
By Lemma \ref{lem:FlatLengthPropotional} $f(M)=\lambda^{e}_{I_0S}(S,J)$.
By Lemma  \ref{lem:SeDefinable}(\ref{SeLessIfinC}) we have $\lambda^{e}_{I_0S}(S,J)\leq s$, so $s^{e}_{I_0S}(S)\leq s$.
By Corollary \ref{cor:etaleSe}, $s^{e}_{I_0}(R_Q)\leq s$,
as desired.

\section{Approximation of regular maps}\label{sec:GammaReg}

In this section we use differential module calculations and some diagram chasing 
to prove a result about ``approximating'' a regular map,
Theorem \ref{thm:GammaSmooth}.
Its consequence, Corollary \ref{cor:RegAGammaToBGamma},
is a useful d\'evissage tool for our Theorem \ref{thm:RegExtn}, and may be such in other situations.
The proofs of Theorems \ref{thm:GammaSrel} and \ref{thm:SemiContOnGring} 
do not directly depend on the materials in this section.

\begin{Lem}\label{lem:GammaLostWhatOmega}
Let $(A,\fm)$ be a Noetherian complete local ring, $k$ a coefficient field of $A$.
Let $\Lambda$ be a $p$-basis of $k$ and $\Gamma\subseteq\Lambda$ be a cofinite subset.

Then the kernel of the canonical map
$\Omega_{A/\bF_p}\otimes_A k^\Gamma\to \Omega_{A^\Gamma/\bF_p}\otimes_{A^\Gamma} k^\Gamma$
is spanned by $\mathrm{d}\lambda\ (\lambda\in\Gamma).$
\end{Lem}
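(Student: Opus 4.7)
The plan is to reduce via Cohen's structure theorem to the power series case, where a direct computation on the coefficient field suffices, and then propagate to the quotient via the conormal exact sequence. The containment of the $k^\Gamma$-span of $\{\mathrm{d}\lambda : \lambda \in \Gamma\}$ in the kernel is immediate in characteristic $p$: each $\lambda \in \Gamma$ has a $p$th root in $k^\Gamma$, so $\mathrm{d}\lambda = 0$ in $\Omega_{A^\Gamma/\bF_p}$. The content is the reverse inclusion.

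Write $A = P/I$ by Cohen's theorem with $P = k[[x_1, \ldots, x_n]]$; by flatness of $k \to k^\Gamma$ and the definition of the $\Gamma$-construction, $A^\Gamma = P^\Gamma/IP^\Gamma$ with $P^\Gamma = k^\Gamma[[x_1, \ldots, x_n]]$. For the case $A = P$, every $\mathrm{d}a$ with $a \in P$ reduces modulo $\fm_P \cdot \Omega_{P/\bF_p}$ to $\mathrm{d}\lambda + \sum c_i\, \mathrm{d}x_i$, where $\lambda$ is the residue of $a$ and $c_i \in k$ are the linear coefficients of $a$ (note $\mathrm{d}(\fm_P^2) \subseteq \fm_P \cdot \Omega_{P/\bF_p}$ by Leibniz); hence
\[
\Omega_{P/\bF_p} \otimes_P k \cong \Omega_{k/\bF_p} \oplus \bigoplus_i k\,\mathrm{d}x_i,
\]
and analogously for $P^\Gamma$. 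The comparison map is the identity on the $\mathrm{d}x_i$-summand, and on the coefficient-field factor it is the canonical $\Omega_{k/\bF_p} \otimes_k k^\Gamma \to \Omega_{k^\Gamma/\bF_p}$; since $\Lambda \setminus \Gamma$ is a $p$-basis of $k^\Gamma$, this map has kernel exactly the $k^\Gamma$-span of $\{\mathrm{d}\lambda : \lambda \in \Gamma\}$, establishing the lemma for $A = P$.

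To transfer to $A = P/I$, I would invoke the right-exact conormal sequence
\[
I/\fm_P I \xrightarrow{\mathrm{d}} \Omega_{P/\bF_p} \otimes_P k \to \Omega_{A/\bF_p} \otimes_A k \to 0,
\]
and its analog for $A^\Gamma = P^\Gamma/IP^\Gamma$. By flatness of $k \to k^\Gamma$ (hence of $P \to P^\Gamma$), tensoring the above with $k^\Gamma$ stays exact, and $IP^\Gamma/\fm_{P^\Gamma}IP^\Gamma$ is identified with $(I/\fm_P I) \otimes_k k^\Gamma$; consequently the image of the $\Gamma$-side ``relations'' in $\Omega_{P^\Gamma/\bF_p}\otimes_{P^\Gamma}k^\Gamma$ is precisely the image of the base relations tensored with $k^\Gamma$. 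A brief diagram chase then identifies the kernel of $\Omega_{A/\bF_p}\otimes_A k^\Gamma \to \Omega_{A^\Gamma/\bF_p}\otimes_{A^\Gamma} k^\Gamma$ with the image of the $P$-level kernel computed above, which is the span of $\{\mathrm{d}\lambda : \lambda \in \Gamma\}$.

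The main technical point is the decomposition of $\Omega_{P/\bF_p} \otimes_P k$; once this and the conormal sequence are in hand, everything else is a routine diagram chase powered by the flatness of $k \to k^\Gamma$. If the context forces $\Omega$ to mean the continuous (completed) K\"ahler differentials rather than the algebraic ones, the same argument applies verbatim after verifying these standard structural facts in that setting.
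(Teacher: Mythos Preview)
Your argument is correct and takes a genuinely different route from the paper's. The paper never chooses a Cohen presentation; instead it introduces the intermediate ring $A_\Gamma=A\otimes_k k^\Gamma$, uses a five-term cotangent sequence to show $\Omega_{A_\Gamma/\bF_p}\otimes k^\Gamma\cong\Omega_{A^\Gamma/\bF_p}\otimes k^\Gamma$ (because $A_\Gamma\to A^\Gamma$ is an isomorphism mod $\fm^2$), and then reads off the kernel from the sequence $H_1(L_{A_\Gamma/A})\to\Omega_{A/\bF_p}\otimes A_\Gamma\to\Omega_{A_\Gamma/\bF_p}$ together with the explicit presentation $A_\Gamma=\bigcup_e A[X_{\lambda,e}]/(X_{\lambda,e}^{p^e}-\lambda)$. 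Your approach---reduce to the power series ring via Cohen, split off the coefficient-field differentials there, and then push through the conormal sequence for $P\to A$---is more elementary in that it avoids cotangent-complex language, while the paper's approach is more intrinsic (no presentation needed) and makes the source of the kernel transparent via the relative presentation of $A_\Gamma$ over $A$.

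Two small points to tighten. First, $P^\Gamma$ is not literally $k^\Gamma[[x_1,\dots,x_n]]$ when $\Gamma$ is infinite---it is $\bigcup_e k^{\Gamma,e}[[x_1,\dots,x_n]]$---but this does not matter: your decomposition $\Omega_{P^\Gamma/\bF_p}\otimes k^\Gamma\cong\Omega_{k^\Gamma/\bF_p}\oplus\bigoplus_i k^\Gamma\,\mathrm{d}x_i$ holds for $P^\Gamma$ as constructed, since $k^\Gamma$ is still a coefficient field and $\fm_{P^\Gamma}=(x_1,\dots,x_n)$. Second, your Leibniz argument only shows that $\Omega_{P/\bF_p}\otimes_P k$ is \emph{spanned} by $\Omega_{k/\bF_p}$ and the $\mathrm{d}x_i$; for the direct sum you need injectivity of $\fm_P/\fm_P^2\to\Omega_{P/\bF_p}\otimes_P k$, which you can get either from $H_1(L_{k/\bF_p})=0$ (any extension of a perfect field is separable) or more concretely by pairing against the $\bF_p$-derivations $\partial/\partial x_i:P\to P$. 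With those two clarifications your proof goes through cleanly.
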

\begin{proof}
Let $A_\Gamma=A\otimes_k k^\Gamma$.
The ring $A_\Gamma$ is Noetherian; we do not use this fact.
By definition, there is a canonical map $A_\Gamma\to A^\Gamma$ that is an isomorphism modulo $\fm^2$.
Examine the following commutative diagram of $k^\Gamma$-vector spaces with exact rows (cf. \citetwostacks{00S2}{07BP})
\[
\begin{CD}
H_1(L_{k^\Gamma/\bF_p})@>>>
\fm A_\Gamma/\fm^2 A_\Gamma @>>>
\Omega_{A_\Gamma/\bF_p}/\fm @>>>
\Omega_{k^\Gamma/\bF_p} @>>> 0\\
@V{\wr}VV @V{\wr}VV @VVV @V{\wr}VV @.\\
H_1(L_{k^\Gamma/\bF_p})@>>>
\fm A^\Gamma/\fm^2 A^\Gamma @>>>
\Omega_{A^\Gamma/\bF_p}/\fm @>>>
\Omega_{k^\Gamma/\bF_p} @>>> 0
\end{CD}
\]
we see $\Omega_{A_\Gamma/\bF_p}/\fm=\Omega_{A^\Gamma/\bF_p}/\fm$.
Thus our desired kernel is the kernel of $\Omega_{A/\bF_p}\otimes_A k^\Gamma\to \Omega_{A_\Gamma/\bF_p}/\fm$.
Next, consider the exact sequence \citestacks{00S2}
\[
\begin{CD}
H_1(L_{A_\Gamma/A})@>{\delta}>> \Omega_{A/\bF_p}\otimes_A A^\Gamma @>>>
\Omega_{A_\Gamma/\bF_p} @>>>
\Omega_{A_\Gamma/A} @>>> 0
\end{CD}
\]
We know that $A_\Gamma=A\otimes_k k^\Gamma=\bigcup_e A[X_{\lambda,e}\ (\lambda\in\Gamma)]/(X_{\lambda,e}^{p^e}-\lambda)$,
so $\Omega_{A_\Gamma/A}=0$.
Thus the kernel of $\Omega_{A/\bF_p}\otimes_A k^\Gamma\to \Omega_{A_\Gamma/\bF_p}/\fm$
is the image of the image of $\delta$.
Using the presentation once again
we see that this vector space is spanned by $\mathrm{d}\lambda\ (\lambda\in\Gamma).$
\end{proof}

\begin{Thm}\label{thm:GammaSmooth}
Let $\varphi\colon (A,\fm,k)\to (B,\fn,l)$ be a flat local map of Noetherian complete local rings such that $\overline{B}:=B/\fm B$ is geometrically regular over $k$.
Fix a coefficient field $k\subseteq A$ and a $p$-basis $\Lambda$ of $k$.
Then the following hold.

\begin{enumerate}
    \item\label{SmallGammaIndepInL} There exists a cofinite subset $\Gamma_0\subseteq \Lambda$ such that the image of $\Gamma_0$ in $l$ is $p$-independent.
    
    \item\label{CoeffContainSmallGamma} Let $\Gamma_0$ be as in $(\ref{SmallGammaIndepInL})$.
    Then there exists a coefficient field $l'\subseteq B$ that contains the set $\varphi(\Gamma_0)\subseteq B$.
    
    \item\label{AGammaInBGamma} Let $\Gamma_0,l'$ be as in $(\ref{CoeffContainSmallGamma})$,
    and extend $\varphi(\Gamma_0)$ to a $p$-basis $\Lambda'$ of $l'$.
    Then for each cofinite subset $\Gamma$ of $\Gamma_0$ and each cofinite subset $\Gamma'$ of $\Lambda'$
    with $\varphi(\Gamma)\subseteq \Gamma'$,
    there exists a canonical $A$-algebra map
    $A^{\Gamma}\to B^{\Gamma'}$
    that is flat and local.
    
    \item\label{SmallGammaSmooth} Let $\Gamma_0,l',\Lambda'$ be as in $(\ref{AGammaInBGamma})$.
    Then for each cofinite subset $\Gamma$ of $\Gamma_0$ there exists a cofinite subset $\Gamma_0'$ of $\Lambda'$ containing $\varphi(\Gamma)$,
    such that for any cofinite subset $\Gamma'$ of $\Gamma_0'$ containing $\varphi(\Gamma)$,
    the canonical map $A^{\Gamma}\to B^{\Gamma'}$ in $(\ref{AGammaInBGamma})$ is regular.
\end{enumerate}
\end{Thm}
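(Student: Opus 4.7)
For part (1), the plan is to use that geometric regularity of $\overline{B}/k$ forces the residue field extension $l/k$ to be separable in the sense of MacLane, so $\Omega^1_{k/\bF_p}\otimes_k l \to \Omega^1_{l/\bF_p}$ is injective. A $p$-basis $\Lambda$ of $k$ yields a $k$-basis $\{\mathrm{d}\lambda:\lambda\in\Lambda\}$ of $\Omega^1_{k/\bF_p}$; its image remains $l$-linearly independent in $\Omega^1_{l/\bF_p}$, equivalent to the image of $\Lambda$ in $l$ being $p$-independent. Thus $\Gamma_0=\Lambda$ suffices. For part (2), I would extend the image of $\varphi(\Gamma_0)$ in $l$ to a $p$-basis of $l$, lift the added elements arbitrarily to $B$, and apply Cohen's structure theorem (cf.\ \cite[(28.J), Theorem 60]{Matsumura}) to produce a coefficient field $l'\subseteq B$ containing $\varphi(\Gamma_0)$, with the combined lifted set serving as the $p$-basis $\Lambda'$.

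For part (3), I would build $A^\Gamma\to B^{\Gamma'}$ level by level. The rule $\lambda^{1/p^e}\mapsto \varphi(\lambda)^{1/p^e}$ for $\lambda\in\Gamma$ is well-defined because $\varphi(\lambda)\in\varphi(\Gamma)\subseteq\Gamma'\subseteq l'$ and $p^e$th roots are unique in characteristic $p$; combined with $\varphi$ on $A$, it gives a ring map $k^{\Gamma,e}\otimes_k A\to B^{\Gamma'}$. Completing and passing to the colimit over $e$ produces $A^\Gamma\to B^{\Gamma'}$. Locality follows since the maximal ideal of $A^\Gamma$ is $\fm A^\Gamma$, mapping into $\fm B^{\Gamma'}\subseteq\fn B^{\Gamma'}$. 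Flatness is the local criterion of flatness applied to the flat extension $A\to A^\Gamma$ (Lemma \ref{lem:GammaFacts}(\ref{GammaNoeFfin})): $B^{\Gamma'}$ is $A$-flat as a composition of flat maps, and the closed fiber is trivially flat over the field $k^\Gamma$.

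Part (4) is where the real work lies. Since $A^\Gamma$ and $B^{\Gamma'}$ are $F$-finite Noetherian excellent local rings and the map is flat local, I would reduce regularity to a differential criterion: the map is regular iff $\Omega^1_{A^\Gamma/\bF_p}\otimes_{A^\Gamma}B^{\Gamma'}\to \Omega^1_{B^{\Gamma'}/\bF_p}$ is split injective as a map of $B^{\Gamma'}$-modules. The original $\varphi$ is itself regular (since $A$ complete is a G-ring and the closed fiber is geometrically regular), giving the analogous split injection $\Omega^1_{A/\bF_p}\otimes_A B\hookrightarrow \Omega^1_{B/\bF_p}$. Lemma \ref{lem:GammaLostWhatOmega} applied to $A$ and to $B$ identifies the kernels of the natural comparison maps from $\Omega^1_A$ to $\Omega^1_{A^\Gamma}$ and from $\Omega^1_B$ to $\Omega^1_{B^{\Gamma'}}$ (each after suitable residue-field base change) as the spans of $\{\mathrm{d}\lambda:\lambda\in\Gamma\}$ and $\{\mathrm{d}\mu:\mu\in\Gamma'\}$, respectively, and the condition $\varphi(\Gamma)\subseteq\Gamma'$ makes the induced square commute.

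The hard step, which I expect to be the main obstacle, is the diagram chase showing that modulo the maximal ideal, the preimage of the span of $\{\mathrm{d}\mu:\mu\in\Gamma'\}$ under the split injection coming from $\varphi$ coincides exactly with the span of $\{\mathrm{d}\lambda:\lambda\in\Gamma\}$; this is what makes the splitting descend (with Nakayama then lifting from the residue field). The trouble is from any element of $\varphi(\Lambda\setminus\Gamma)\subseteq\Lambda'$ that lies in $\Gamma'$: its $\mathrm{d}$ is the $\varphi$-pullback of a nonzero $\mathrm{d}\lambda$ with $\lambda\in\Lambda\setminus\Gamma$, so killing it in $B^{\Gamma'}$ without killing it in $A^\Gamma$ would destroy injectivity. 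Since $\Gamma$ is cofinite in $\Gamma_0=\Lambda$, the set $\varphi(\Lambda\setminus\Gamma)$ is finite, so the choice $\Gamma_0'=\Lambda'\setminus\varphi(\Lambda\setminus\Gamma)$ (cofinite in $\Lambda'$ and still containing $\varphi(\Gamma)$) excludes this obstruction. Any remaining $\mu\in\Gamma'\setminus\varphi(\Gamma)$ then lies in $\Lambda'\setminus\varphi(\Lambda)$, whose $\mathrm{d}\mu$ represents a genuinely new direction in $\Omega^1_{\overline{B}/k}$ not pulled back from $A$; killing such elements preserves the splitting and yields regularity.
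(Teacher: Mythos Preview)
Your argument for part (1) contains a genuine error: geometric regularity of $\overline{B}$ over $k$ does \emph{not} imply that the residue field extension $l/k$ is separable. For a counterexample take $k=\bF_p(t)$ and $\overline{B}=k[x]_{(x^p-t)}$; this is a localization of a smooth $k$-algebra, hence geometrically regular over $k$, yet its residue field $l=k[x]/(x^p-t)\cong\bF_p(t^{1/p})$ is purely inseparable over $k$. Here $\Lambda=\{t\}$, and the image of $t$ in $l$ is a $p$th power, so it is \emph{not} $p$-independent: you cannot take $\Gamma_0=\Lambda$. What geometric regularity does give (via \citestacks{07E5}) is injectivity of $\Omega_{k/\bF_p}\otimes_k l\to\Omega_{\overline{B}/\bF_p}\otimes_{\overline{B}} l$, and the paper then uses that the further map to $\Omega_{l/\bF_p}$ has \emph{finite-dimensional} kernel (coming from $\fn\overline{B}/\fn^2\overline{B}$) to throw away only finitely many elements of $\Lambda$.

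This error cascades into part (4). Your choice $\Gamma_0'=\Lambda'\setminus\varphi(\Lambda\setminus\Gamma)$ presupposes that $\varphi(\Lambda\setminus\Gamma)\subseteq\Lambda'$, which in turn rests on $\Gamma_0=\Lambda$. Once $\Gamma_0\subsetneq\Lambda$, the elements $\varphi(\lambda)$ for $\lambda\in\Lambda\setminus\Gamma_0$ are not members of $\Lambda'$ at all, so ``removing'' them from $\Lambda'$ is meaningless, and more importantly their differentials $\mathrm{d}\lambda$ in $\Omega_{l/\bF_p}$ are (possibly nontrivial) linear combinations of the $\mathrm{d}\lambda'$ for $\lambda'\in\Lambda'$. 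The paper handles this by observing that the images of the finitely many $\mathrm{d}\lambda$ ($\lambda\in\Lambda\setminus\Gamma$) in $\bigoplus_{\lambda'\in\Lambda'\setminus\varphi(\Gamma)}l\,\mathrm{d}\lambda'$ are supported on some finite set $\Lambda_0'\subseteq\Lambda'\setminus\varphi(\Gamma)$, and takes $\Gamma_0'=\Lambda'\setminus\Lambda_0'$; a diagram chase using Lemma~\ref{lem:GammaLostWhatOmega} then shows these differentials survive in $\Omega_{\overline{B}^{\Gamma'}/\bF_p}/\fn$. Your overall strategy for (4) via differentials is in the right spirit, but the finite-dimensional bookkeeping is more delicate than excluding a literal finite subset of $\Lambda'$.
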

\begin{proof}
We shall use, without further mentioning, that $p$-independence is the same as differential independence, and a $p$-basis is the same as a differential basis; see \citestacks{07P2}.

By \citestacks{07E5}, the canonical map
$\Omega_{k/\bF_p}\otimes_k l\to \Omega_{\overline{B}/\bF_p}\otimes_{\overline{B}} l$ is injective.
Let $V$ denote its image.
By the exact sequence \citestacks{00RU}
we see that $W:=\ker(\Omega_{\overline{B}/\bF_p}\otimes_{\overline{B}} l\to \Omega_{l/\bF_p})$
is a finite dimensional $l$-vector space.
Thus there exists a finite subset $\Lambda_0$ of $\Lambda$
such that $V\cap W$ is contained in the linear span of the images of $\mathrm{d}\lambda\ (\lambda\in\Lambda_0)$ in $V$.
Thus we see that for $\Gamma_0=\Lambda\setminus\Lambda_0$
the images of $\mathrm{d}\lambda\ (\lambda\in \Gamma_0)$
in $\Omega_{l/\bF_p}$ are $l$-linearly independent, showing (\ref{SmallGammaIndepInL}).

Let $\Gamma_0$ be as in (\ref{SmallGammaIndepInL}).
Note that $\Omega_{\bF_p(\Gamma_0)/\bF_p}$
has a basis $\mathrm{d}\lambda\ (\lambda\in \Gamma_0)$ and they are mapped to $l$-linearly independent elements of $\Omega_{l/\bF_p}$.
By \citestacks{07EL},
we see that $\bF_p(\Gamma_0)\to l$ is formally smooth,
thus by definition \citestacks{00TI} and the completeness of $B$
we can lift the identity map $l=B/\fn$ to an $\bF_p(\Gamma_0)$-algebra map $l\to B$,
showing (\ref{CoeffContainSmallGamma}).

Now let $\Gamma_0,l'$ be as in (\ref{CoeffContainSmallGamma})
and take $\Gamma\subseteq\Gamma_0$.
For any cofinite subset $\Gamma'$ of $\Lambda'$ containing $\varphi(\Gamma)$ and any positive integer $e$
we have a canonical map $k^{\Gamma,e}\to l^{\Gamma',e}$ of $k$-algebras, see Construction \ref{constr:Gamma}.
This induces a canonical map $A^\Gamma\to B^{\Gamma'}$ of $A$-algebras.
This map is clearly local, and it is flat since both $A^\Gamma$ and $B^{\Gamma'}$ are flat over $A$ (Lemma \ref{lem:GammaFacts}(\ref{GammaNoeFfin})) and $A^\Gamma/\fm A^\Gamma=k^\Gamma$ is a field, see \citestacks{00MK}.

Finally, let $\Gamma_0,l',\Lambda'$ be as in (\ref{AGammaInBGamma}) and fix a cofinite subset $\Gamma$ of $\Gamma_0$.
Then $A^\Gamma$ is excellent (Lemma \ref{lem:GammaFacts}(\ref{GammaNoeFfin})). 
Thus for any $\Gamma'$ containing $\varphi(\Gamma)$, the flat map $A^\Gamma\to B^{\Gamma'}$ is regular 
if $k^\Gamma\to \overline{B}^{\Gamma'}$ is formally smooth \cite{Andre}.
Note that $\overline{B}^{\Gamma'}$ is a regular local ring, since $\overline{B}$ is regular, the map $\overline{B}\to\overline{B}^{\Gamma'}$ is flat, and $\overline{B}^{\Gamma'}/\fn \overline{B}^{\Gamma'}=l^{\Gamma'}$ is a field.
By \citestacks{07E5},
we must show that the elements $\mathrm{d}\lambda\ (\lambda\in\Lambda\setminus\Gamma)$
are linearly independent in $\Omega_{\overline{B}^{\Gamma'}/\bF_p}/\fn$ for all small $\Gamma'$ containing $\varphi(\Gamma)$.

Recall that $\Omega_{k/\bF_p}\otimes_k l\to \Omega_{\overline{B}/\bF_p}\otimes_{\overline{B}} l$ is injective.
Therefore the images of the elements $\mathrm{d}\lambda\ (\lambda\in\Lambda\setminus\Gamma)$
are linearly independent in $\frac{\Omega_{\overline{B}/\bF_p}\otimes_{\overline{B}} l}{\operatorname{span}\{\mathrm{d}\lambda\mid\lambda\in \Gamma\}}$,
and by base change linearly independent in $\frac{\Omega_{\overline{B}/\bF_p}\otimes_{\overline{B}} l^{\Gamma'}}{\operatorname{span}\{\mathrm{d}\lambda\mid\lambda\in \Gamma\}}$.
Now, the ring map $\overline{B}\to \overline{B}^{\Gamma'}$ induces a commutative diagram 
\[
\begin{CD}
\Omega_{\overline{B}/\bF_p}\otimes_{\overline{B}} l^{\Gamma'} @>>>
\Omega_{l/\bF_p}\otimes_l l^{\Gamma'}\\ 
@V{\psi}VV @VVV\\ 
\Omega_{\overline{B}^{\Gamma'}/\bF_p}/\fn @>>>
\Omega_{l^{\Gamma'}/\bF_p}\\ 
\end{CD}
\]
and we know that $\ker(\psi)$ is spanned by $\mathrm{d}\lambda'\ (\lambda'\in\Gamma')$ by Lemma  \ref{lem:GammaLostWhatOmega}.
Note that $\varphi(\Gamma)$ is part of the $p$-basis $\Lambda'$,
and every element of $\Gamma$ becomes a $p$-power in $\overline{B}^{\Gamma'}$.
Thus if we factor out the images of all $\mathrm{d}\lambda\ (\lambda\in\Gamma)$ in the diagram,
we get a commutative diagram 
\[
\begin{CD}
\frac{\Omega_{\overline{B}/\bF_p}\otimes_{\overline{B}} l^{\Gamma'}}{\operatorname{span}\{\mathrm{d}\lambda\mid\lambda\in \Gamma\}} @>>>
\bigoplus_{\lambda'\in \Lambda'\setminus\varphi(\Gamma)} l^{\Gamma'}\mathrm{d}\lambda'\\ 
@V{\overline{\psi}}VV @V{\theta}VV\\ 
\Omega_{\overline{B}^{\Gamma'}/\bF_p}/\fn @>>>
\bigoplus_{\lambda'\in \Lambda'\setminus\Gamma'} l^{\Gamma'}\mathrm{d}\lambda'\\ 
\end{CD}
\]
where $\ker(\overline{\psi})$ is spanned by $\mathrm{d}\lambda'\ (\lambda'\in\Gamma'\setminus\varphi(\Gamma))$, thus maps isomorphically onto $\ker(\theta)$.
Since the elements $\mathrm{d}\lambda\ (\lambda\in\Lambda\setminus\Gamma)$
are linearly independent in $\frac{\Omega_{\overline{B}/\bF_p}\otimes_{\overline{B}} l^{\Gamma'}}{\operatorname{span}\{\mathrm{d}\lambda\mid\lambda\in \Gamma\}}$,
for them to be linearly independent in $\Omega_{\overline{B}^{\Gamma'}/\bF_p}/\fn$ it suffices that the space their images span in $\bigoplus_{\lambda'\in \Lambda'\setminus\varphi(\Gamma)} l^{\Gamma'}\mathrm{d}\lambda'$
has zero intersection with $\ker(\theta)$ (diagram chase).
Now the choice is clear: take a finite subset $\Lambda'_0$ of $\Lambda'\setminus\varphi(\Gamma)$
such that the images of $\mathrm{d}\lambda\ (\lambda\in\Lambda\setminus\Gamma)$ in $\Omega_{l/\bF_p}/\operatorname{span}\{\mathrm{d}\varphi(\lambda)\mid\lambda\in \Gamma\}=\bigoplus_{\lambda'\in \Lambda'\setminus\varphi(\Gamma)} l\mathrm{d}\lambda'$
are in $\bigoplus_{\lambda'\in \Lambda'_0} l\mathrm{d}\lambda'$,
and take $\Gamma'_0=\Lambda\setminus\Lambda'_0$.
\end{proof}

\begin{Cor}\label{cor:RegAGammaToBGamma}
Let $\varphi:(A,\fm)\to (B,\fn)$ be a flat local map of Noetherian complete local rings such that $\overline{B}:=B/\fm B$ is geometrically regular over $k$.

Fix a coefficient field $k\subseteq A$ and a $p$-basis $\Lambda$ of $k$.
Then there exists a coefficient field $l$ of $B$, not necessarily containing $k$, and a $p$-basis $\Lambda'$ of $l$ that satisfy the following:

For all cofinite subsets $\Gamma_1$ of $\Lambda$ and $\Gamma'_1$ of $\Lambda'$,
there exist cofinite subsets $\Gamma$ of $\Gamma_1$ and $\Gamma'$ of $\Gamma'_1$
and a local map $A^{\Gamma}\to B^{\Gamma'}$ of $A$-algebras that is regular.
\end{Cor}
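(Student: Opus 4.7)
The plan is to deduce the corollary from Theorem \ref{thm:GammaSmooth} by a careful bookkeeping of cofinite subsets. First I would apply parts (\ref{SmallGammaIndepInL}) and (\ref{CoeffContainSmallGamma}) of Theorem \ref{thm:GammaSmooth} once and for all to the given data: choose a cofinite $\Gamma_0\subseteq\Lambda$ whose image in the residue field $l$ of $B$ is $p$-independent, pick a coefficient field $l\subseteq B$ containing $\varphi(\Gamma_0)$, and extend $\varphi(\Gamma_0)$ to a $p$-basis $\Lambda'$ of $l$. These $l$ and $\Lambda'$ are the ones the corollary promises; the remainder of the argument works for any subsequently given cofinite $\Gamma_1\subseteq\Lambda$ and $\Gamma'_1\subseteq\Lambda'$.

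Given such $\Gamma_1$ and $\Gamma'_1$, I need $\Gamma\subseteq\Gamma_1$ and $\Gamma'\subseteq\Gamma'_1$ satisfying the hypotheses of Theorem \ref{thm:GammaSmooth}(\ref{AGammaInBGamma}) and (\ref{SmallGammaSmooth}): namely $\Gamma$ cofinite in $\Gamma_0$, $\Gamma'$ cofinite in a certain $\Gamma'_0$ produced from $\Gamma$, and $\varphi(\Gamma)\subseteq\Gamma'$. The crucial observation is that $\varphi$ sends $\Gamma_0$ injectively into $l$ (since its image is $p$-independent there), so $\varphi^{-1}(\Lambda'\setminus\Gamma'_1)\cap\Gamma_0$ is finite. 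This lets me set
\[
\Gamma=\Gamma_0\cap\Gamma_1\cap\varphi^{-1}(\Gamma'_1),
\]
which is cofinite in $\Lambda$, contained in both $\Gamma_0$ and $\Gamma_1$, and satisfies $\varphi(\Gamma)\subseteq\Gamma'_1$. Applying Theorem \ref{thm:GammaSmooth}(\ref{SmallGammaSmooth}) to this $\Gamma$ then produces a cofinite $\Gamma'_0\subseteq\Lambda'$ containing $\varphi(\Gamma)$ such that $A^\Gamma\to B^{\Gamma'}$ is regular whenever $\Gamma'\subseteq\Gamma'_0$ is cofinite and contains $\varphi(\Gamma)$. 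Taking $\Gamma'=\Gamma'_0\cap\Gamma'_1$ does the job: it is cofinite in $\Lambda'$, contained in $\Gamma'_1$ as required, contained in $\Gamma'_0$, and contains $\varphi(\Gamma)$ by combining the inclusions $\varphi(\Gamma)\subseteq\Gamma'_0$ and $\varphi(\Gamma)\subseteq\Gamma'_1$ established above. The map $A^\Gamma\to B^{\Gamma'}$ then exists as an $A$-algebra map by (\ref{AGammaInBGamma}) and is regular by (\ref{SmallGammaSmooth}).

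There is essentially no further obstacle to overcome: Theorem \ref{thm:GammaSmooth} has already done all the substantive work. The content of the corollary is just that the coefficient field $l$ and $p$-basis $\Lambda'$ can be chosen a priori, independently of how small the cofinite subsets $\Gamma$ and $\Gamma'$ will eventually need to be. The only mild technicality is the injectivity of $\varphi$ on $\Gamma_0$ (as a map into $l$), which keeps preimages of cofinite subsets of $\Lambda'$ cofinite in $\Gamma_0$, so that the cofiniteness conditions imposed by $\Gamma_1$ and by $\Gamma'_1$ can be met simultaneously by a single choice of $\Gamma$.
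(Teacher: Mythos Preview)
Your proof is correct and follows essentially the same route as the paper's own proof: choose $\Gamma_0$, $l$, and $\Lambda'$ via Theorem~\ref{thm:GammaSmooth}(\ref{SmallGammaIndepInL})--(\ref{AGammaInBGamma}), then for given $\Gamma_1,\Gamma'_1$ take $\Gamma$ cofinite in $\Gamma_0\cap\Gamma_1$ with $\varphi(\Gamma)\subseteq\Gamma'_1$, apply (\ref{SmallGammaSmooth}) to obtain $\Gamma'_0$, and set $\Gamma'=\Gamma'_0\cap\Gamma'_1$. The paper simply asserts the existence of such a $\Gamma$, whereas you make explicit the reason it exists---the injectivity of $\varphi|_{\Gamma_0}$ coming from $p$-independence of its image---which is a welcome clarification but not a different idea.
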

\begin{proof}
Let $\Gamma_0$ a cofinite subset of $\Lambda$, $l=l'$ a coefficient field of $B$, and $\Lambda'$ a $p$-basis of $l$ be as in Theorem \ref{thm:GammaSmooth}(\ref{AGammaInBGamma});
we shall show that this choice of $l$ and $\Lambda'$ works.

Let $\Gamma$ be a cofinite subset of $\Gamma_0\cap\Gamma_1$ such that $\varphi(\Gamma)\subseteq \Gamma'_1$, and let $\Gamma'_0$ be as in Theorem \ref{thm:GammaSmooth}(\ref{SmallGammaSmooth}) for $\Gamma$.
Then we have $\varphi(\Gamma)\subseteq \Gamma'_0\cap \Gamma'_1$.
Thus by Theorem \ref{thm:GammaSmooth}(\ref{SmallGammaSmooth}) we see $\Gamma$ and $\Gamma':=\Gamma'_0\cap \Gamma'_1$ works.
\end{proof}

\section{Consequences}\label{sec:mainThms} 

We are now able to answer a few questions in \cite[\S 7]{ST}.
These results recover known results about $F$-rationality 
\cite[Theorems 2.2, 3.1, and 3.5]{Vel95}
via \cite[Proposition 2.1]{ST},
and can be viewed as quantitative versions of such.
Note that, as opposed to the case of $F$-signature,
in Theorem \ref{thm:GammaSrel},
we may not have equality before taking the supremum;
and in Theorem \ref{thm:RegExtn}, the closed fiber has to be assumed geometrically regular instead of just regular.
The same is true for the corresponding qualitative statements for $F$-rationality in \cite{Vel95}.
See \cite{QGSS23}.

\begin{Thm}[{\cite[Question 7.4]{ST}}]\label{thm:GammaSrel}
Let $A$ be a Noetherian complete local ring, $k$ a coefficient field of $A$.
Let $\Lambda$ be a $p$-basis of $k$.
Let $R$ be a finite type $A$-algebra and let $Q\in \Spec(R)$.

Then $s_\mathrm{rel}(R_Q)=\sup_\Gamma s_\mathrm{rel}(R^\Gamma_{Q^\Gamma})$,
where $\Gamma$ ranges over all cofinite subsets of $\Lambda$.
\end{Thm}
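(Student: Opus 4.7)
The plan is to sandwich $s_{\mathrm{rel}}(R_Q)$ and $s_{\mathrm{rel}}(R^\Gamma_{Q^\Gamma})$ between truncated invariants using Lemma \ref{lem:truncatedandnot}, identify the truncated ones via Theorem \ref{thm:GammaSe}, and conclude by a limit argument as $e\to\infty$.

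For the inequality $\sup_\Gamma s_{\mathrm{rel}}(R^\Gamma_{Q^\Gamma}) \leq s_{\mathrm{rel}}(R_Q)$, I first restrict by Lemma \ref{lem:GammaFacts}(\ref{QRGammaPrime}) to $\Gamma$ inside some cofinite $\Gamma_0$ with $QR^\Gamma = Q^\Gamma$, so that $R_Q \to R^\Gamma_{Q^\Gamma}$ is a flat local map with closed fiber the field $\kappa(Q^\Gamma)$ and $\dim R_Q = \dim R^\Gamma_{Q^\Gamma}$. For any parameter ideal $I_0$ of $R_Q$ and proper superideal $I$, the flatness formula $l_S(M\otimes_R S) = l_R(M)\, l_S(S/\fm S)$ with $l_S(\kappa(Q^\Gamma))=1$ together with invariance of dimension give $l(R_Q/I) = l(R^\Gamma_{Q^\Gamma}/IR^\Gamma_{Q^\Gamma})$ and $\ehk{I} = \ehk{IR^\Gamma_{Q^\Gamma}}$; thus the defining ratios of $s_{\mathrm{rel}}$ are preserved under extension, and the infimum over a larger family yields $s_{\mathrm{rel}}(R^\Gamma_{Q^\Gamma}) \leq s_{\mathrm{rel}}(R_Q)$.

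For the reverse, I fix a parameter ideal $I_0$ of $R_Q$; for all small $\Gamma$, $I_0 R^\Gamma_{Q^\Gamma}$ is again a parameter ideal. The crucial step is producing a constant $C$ controlling $F$-colength differences uniformly for $R_Q$ and all $R^\Gamma_{Q^\Gamma}$ with $\Gamma$ small. I fix any cofinite $\Gamma^*\subseteq\Lambda$ and apply Theorem \ref{thm:Polstra}(\ref{ExistsC}) to $R^{\Gamma^*}$, which is essentially of finite type over the Noetherian local ring $A^{\Gamma^*}$, to obtain such a $C$ for $R^{\Gamma^*}$; since $R \to R^{\Gamma^*}$ and $R^\Gamma \to R^{\Gamma^*}$ (for $\Gamma\subseteq\Gamma^*$) are faithfully flat, Theorem \ref{thm:Polstra}(\ref{CgoodforflatextnGLOBAL}) transports $C$ to $R$ and to each such $R^\Gamma$, and therefore to the primes $Q$, $Q^\Gamma$. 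Assuming $R_Q^\wedge$ is equidimensional (so that Lemma \ref{lem:truncatedandnot} applies; the equidimensionality of $R^\Gamma_{Q^\Gamma,\wedge}$ is then automatic from the flatness of $R_Q \to R^\Gamma_{Q^\Gamma}$ with field closed fiber), Lemma \ref{lem:truncatedandnot} yields for every $e$ and every small $\Gamma$
\[
|s^e_{I_0}(R_Q) - s_{\mathrm{rel}}(R_Q)| \leq Cp^{-e}, \qquad |s^e_{I_0 R^\Gamma_{Q^\Gamma}}(R^\Gamma_{Q^\Gamma}) - s_{\mathrm{rel}}(R^\Gamma_{Q^\Gamma})| \leq Cp^{-e}.
\]
Applying Theorem \ref{thm:GammaSe} to $(Q,I_0,e)$ yields $s^e_{I_0}(R_Q) = s^e_{I_0R^\Gamma_{Q^\Gamma}}(R^\Gamma_{Q^\Gamma})$ for $\Gamma$ in some cofinite $\Gamma_0(e)$; combining with the previous display, the triangle inequality gives $|s_{\mathrm{rel}}(R^\Gamma_{Q^\Gamma}) - s_{\mathrm{rel}}(R_Q)| \leq 2Cp^{-e}$ for such $\Gamma$. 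Letting $e\to\infty$ and shrinking $\Gamma$ accordingly produces cofinite $\Gamma$'s with $s_{\mathrm{rel}}(R^\Gamma_{Q^\Gamma})$ arbitrarily close to $s_{\mathrm{rel}}(R_Q)$, proving $\sup_\Gamma s_{\mathrm{rel}}(R^\Gamma_{Q^\Gamma}) \geq s_{\mathrm{rel}}(R_Q)$.

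I expect the main obstacle to be the uniform bound $C$: a direct application of Polstra to each $R^\Gamma$ separately would give a $\Gamma$-dependent constant, which does not suffice for the limit argument. The ``go up to $R^{\Gamma^*}$, then descend via faithful flatness'' trick resolves this, hinging on the fact that $R^\Gamma \to R^{\Gamma^*}$ is faithfully flat (being the base change of the flat extension $A^\Gamma \to A^{\Gamma^*}$, with closed fiber a field). Edge cases where $R_Q^\wedge$ fails to be equidimensional or $\dim R_Q = 0$ should be treated separately, either by reduction to the equidimensional situation or by direct verification that both sides of the asserted equality take the trivial value.
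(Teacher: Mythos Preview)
Your approach is essentially the same as the paper's: both use Theorem~\ref{thm:Polstra} applied to a single large $R^{\Gamma^*}$ (the paper takes $\Gamma^*=\Lambda$) and descend the constant $C$ via faithful flatness, then combine Lemma~\ref{lem:truncatedandnot} with Theorem~\ref{thm:GammaSe} and let $e\to\infty$. Your direct argument for the easy inequality is a reproof of \cite[Corollary 3.12]{ST}, which the paper simply cites.

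The one loose end you flag---the equidimensionality hypothesis needed for Lemma~\ref{lem:truncatedandnot}---is resolved cleanly in the paper rather than left as an edge case: if $s_{\mathrm{rel}}(R_Q)=0$ then both sides vanish by the easy inequality and nonnegativity of $s_{\mathrm{rel}}$, while if $s_{\mathrm{rel}}(R_Q)>0$ then $R_Q$ is $F$-rational \cite[Proposition 2.1]{ST}, hence Cohen--Macaulay \cite[Theorem 4.2(c)]{HH94}, so $R_Q^\wedge$ is automatically equidimensional and the same holds for each $R^\Gamma_{Q^\Gamma}$ by flatness with Gorenstein fibers (Lemma~\ref{lem:GammaFacts}(\ref{GammaNoeFfin})). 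This also disposes of the $\dim R_Q=0$ case, since a zero-dimensional $F$-rational ring is a field and everything is trivial.
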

\begin{proof}
We know $s_\mathrm{rel}(R_Q)\geq\sup_\Gamma s_\mathrm{rel}(R^\Gamma_{Q^\Gamma})$ by \cite[Corollary 3.12]{ST}, and it suffices to show the reversed inequality.
We may assume $s_\mathrm{rel}(R_Q)>0$,
so $R_Q$ is $F$-rational
\cite[Proposition 2.1]{ST},
thus $R_Q$ is Cohen-Macaulay \cite[Theorem 4.2(c)]{HH94}.
By Lemma  \ref{lem:GammaFacts}(\ref{GammaNoeFfin}),
all $R^\Gamma_{Q^\Gamma}$ are Cohen-Macaulay,
hence Lemma \ref{lem:truncatedandnot} applies.

Let $I_0$ be a parameter ideal of $R_Q$, so $I_0R^\Gamma_{Q^\Gamma}$ is a parameter ideal of $R^\Gamma_{Q^\Gamma}$ for all $\Gamma$.
By Theorem \ref{thm:Polstra}(\ref{ExistsC}) (or \cite[Theorem 3.6]{Pol}) we can take a constant $C$ that controls $F$-colength differences for $R^{\Lambda}$ (Definition \ref{def:ConstantControl}).
By Theorem \ref{thm:Polstra}(\ref{CgoodforflatextnGLOBAL}) we see that $C$ controls $F$-colength differences for $R$ and all $R^\Gamma$ where $\Gamma$ is a cofinite subset of $\Lambda$.
Thus for all positive integers $e$, $|s_{I_0}^e(R_Q)-s_\mathrm{rel}(R_Q)|\leq Cp^{-e}$, and for all $\Gamma$,
$|s_{I_0R^\Gamma_{Q^\Gamma}}^e(R^\Gamma_{Q^\Gamma})-s_\mathrm{rel}(R^\Gamma_{Q^\Gamma})|\leq Cp^{-e}$.
From Theorem \ref{thm:GammaSe} we see
$\sup_\Gamma s_\mathrm{rel}(R^\Gamma_{Q^\Gamma})\geq s_\mathrm{rel}(R_Q)-2Cp^{-e}$ for all $e$, as desired.
\end{proof}

\begin{Thm}[{\cite[Question 7.3]{ST}}]\label{thm:RegExtn}
Let $R\to S$ be a flat local map of Noetherian local rings with geometrically regular closed fiber.
Then $s_\mathrm{rel}(R)=s_\mathrm{rel}(S)$.
\end{Thm}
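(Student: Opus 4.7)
The plan is to reduce to the case where $R$ and $S$ are complete Cohen--Macaulay local rings, and then apply the $F$-finite case of Smirnov--Tucker via the $\Gamma$-construction. First I would dispose of a trivial case: if $s_\mathrm{rel}(R)=0$, then $R$ is not $F$-rational, whence $S$ is not $F$-rational either (since $F$-rationality both ascends and descends along flat local maps with geometrically regular fibers), giving $s_\mathrm{rel}(S)=0$; the symmetric statement handles the case $s_\mathrm{rel}(S)=0$. So I may assume both rings are $F$-rational, in particular Cohen--Macaulay, and therefore have equidimensional completions.

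Next I would show that completion preserves $s_\mathrm{rel}$ in this regime. For any parameter ideal $I_0$ of $R$, Lemma \ref{lem:FlatLengthPropotional} yields $s_{I_0}^e(R) = s_{I_0 R^\wedge}^e(R^\wedge)$ for all $e$, since $R/I_0 \cong R^\wedge/I_0 R^\wedge$ for $\fm$-primary $I_0$. Using Theorem \ref{thm:Polstra} to produce a single constant $C$ that controls $F$-colength differences for both $R^\wedge$ (by (\ref{ExistsC})) and $R$ (by (\ref{CgoodforflatextnGLOBAL})), Lemma \ref{lem:truncatedandnot} sandwiches both $s_\mathrm{rel}(R)$ and $s_\mathrm{rel}(R^\wedge)$ within $Cp^{-e}$ of this common value for every $e$, forcing equality. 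Doing the same for $S$, and observing that $R^\wedge \to S^\wedge$ still has geometrically regular closed fiber (preserved by completion via the formal-smoothness characterization), I can replace $R$ and $S$ by their completions.

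With $R$ and $S$ complete, fix a coefficient field $k$ of $R$ and a $p$-basis $\Lambda$, and invoke Corollary \ref{cor:RegAGammaToBGamma} to select a coefficient field $l$ of $S$ and a $p$-basis $\Lambda'$ with the following property: for any prescribed cofinite $\Gamma_1 \subseteq \Lambda$ and $\Gamma_1' \subseteq \Lambda'$, there exist refinements $\Gamma \subseteq \Gamma_1$ and $\Gamma' \subseteq \Gamma_1'$ admitting a regular local map $R^\Gamma \to S^{\Gamma'}$. Both rings are then $F$-finite by Lemma \ref{lem:GammaFacts}(\ref{GammaNoeFfin}), so the $F$-finite case established by Smirnov--Tucker applies to give $s_\mathrm{rel}(R^\Gamma) = s_\mathrm{rel}(S^{\Gamma'})$.

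To conclude, the argument underlying Theorem \ref{thm:GammaSrel} (through Theorem \ref{thm:GammaSe} together with a uniform Polstra constant) in fact yields a uniform approximation: for every $\varepsilon > 0$ there is a cofinite $\Gamma_1 \subseteq \Lambda$ with $|s_\mathrm{rel}(R) - s_\mathrm{rel}(R^\Gamma)| < \varepsilon$ for all cofinite $\Gamma \subseteq \Gamma_1$, and similarly a $\Gamma_1' \subseteq \Lambda'$ for $S$. Applying Corollary \ref{cor:RegAGammaToBGamma} to this pair yields a compatible $(\Gamma, \Gamma')$ with $s_\mathrm{rel}(R^\Gamma) = s_\mathrm{rel}(S^{\Gamma'})$, so $|s_\mathrm{rel}(R) - s_\mathrm{rel}(S)| < 2\varepsilon$, and letting $\varepsilon \to 0$ finishes the proof. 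The main obstacle is this simultaneous choice: one must pick $\Gamma, \Gamma'$ small enough for the $\Gamma$-approximation of $s_\mathrm{rel}$ to be tight while also producing a regular map between the two $\Gamma$-constructions, and Corollary \ref{cor:RegAGammaToBGamma} is precisely tailored for that purpose.
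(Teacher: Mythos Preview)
Your proposal is correct and follows essentially the same route as the paper: reduce to complete Cohen--Macaulay rings, invoke Corollary~\ref{cor:RegAGammaToBGamma} to produce a regular map $R^\Gamma\to S^{\Gamma'}$ between $F$-finite rings, apply the Smirnov--Tucker $F$-finite result, and use Theorem~\ref{thm:GammaSe} with a uniform Polstra constant to transfer the equality back. The paper handles the trivial case more directly via the inequality $s_\mathrm{rel}(S)\leq s_\mathrm{rel}(R)$ from \cite[Proposition~3.12]{ST} (avoiding your detour through ascent/descent of $F$-rationality, which would need side conditions on $R$ for the implication ``$F$-rational $\Rightarrow s_\mathrm{rel}>0$''), and phrases the final approximation at the level of the truncated invariants $s^e$ rather than $s_\mathrm{rel}$, but these are cosmetic differences.
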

\begin{proof}
Note that if $k$ is a field and $A$ is a Noetherian local geometrically regular $k$-algebra, then so is $A^\wedge$, since $A^\wedge\otimes_k k'$ is the completion of $A\otimes_k k'$ for all finite purely inseparable extensions $k'$ of $k$.
We may therefore assume $R,S$ complete
since completion does not change the relative $F$-rational signature (cf. \cite[Observation 2.2(1)]{HY}).
Fix a parameter ideal $I_0$ of $R$ and a parameter ideal $J_0$ of $S$.
We do not require $I_0R\subseteq J_0$.
Since $0\leq s_\mathrm{rel}(S)\leq s_\mathrm{rel}(R)$ \cite[Proposition 3.12]{ST},
we may assume $s_{\mathrm{rel}}(R)>0$,
so $R$ is $F$-rational
\cite[Proposition 2.1]{ST},
thus $R$ is Cohen-Macaulay \cite[Theorem 4.2(c)]{HH94}. 
Then $S$ is Cohen-Macaulay \citestacks{045J}.

Let $k$ be a coefficient field of $R$ and $\Lambda$ a $p$-basis of $k$.
Let $l$ be a coefficient field of $S$ and $\Lambda'$ a $p$-basis of $l$ as in Corollary \ref{cor:RegAGammaToBGamma}.
By Theorem \ref{thm:Polstra}(\ref{ExistsC}) (or \cite[Theorem 3.6]{Pol}) we can find a constant $C$ that controls $F$-colength differences for $S^{\Lambda'}$ (Definition \ref{def:ConstantControl}).

Fix a positive integer $e$. By Theorem \ref{thm:GammaSe},
there exists a cofinite subset
$\Gamma_1$ of $\Lambda$ such that such that for all cofinite subsets $\Gamma$ of $\Gamma_1$,
$s^e_{I_0}(R)=s^e_{I_0R^{\Gamma}}(R^{\Gamma})$;
and a cofinite subset $\Gamma'_1$ of $\Lambda'$,
such that for all cofinite subsets $\Gamma'$ of $\Gamma'_1$,
$s^e_{J_0}(S)=s^e_{J_0S^{\Gamma'}}(S^{\Gamma'})$.
Corollary \ref{cor:RegAGammaToBGamma} ensures that
we can find $\Gamma\subseteq \Gamma_1$ and $\Gamma'\subseteq \Gamma'_1$ that admit an $R$-algebra map $R':=R^\Gamma\to S^{\Gamma'}=:S'$ that is local and regular.
Since $R$ and $S$ are Cohen-Macaulay, so are $R'$ and $S'$, see Lemma  \ref{lem:GammaFacts}(\ref{GammaNoeFfin}).
Thus by \cite[Corollaries 5.9 and 5.14]{ST}, $s_{\mathrm{rel}}(R')=s_{\mathrm{rel}}(S')$.

By Theorem \ref{thm:Polstra}(\ref{CgoodforflatextnGLOBAL}) we see that $C$ controls $F$-colength differences for $R,R',S$, and $S'$.
In particular, we have $|s^e_{\fa}(D)-s_{\mathrm{rel}}(D)|\leq Cp^{-e}$ where $D$ is $R,R',S$, or $S'$ and $\fa$ is any parameter ideal of $D$ (Lemma  \ref{lem:truncatedandnot}).
By our choices we see that $s^e_{I_0}(R)=s^e_{I_0R'}(R')$ and
$s^e_{J_0}(S)=s^e_{J_0S'}(S')$.
Now, $s_{\mathrm{rel}}(R')=s_{\mathrm{rel}}(S')$ implies
$|s_{\mathrm{rel}}(R)-s_{\mathrm{rel}}(S)|\leq 4Cp^{-e}$.
Since $e$ was arbitrary,
we see $s_{\mathrm{rel}}(R)=s_{\mathrm{rel}}(S)$, as desired.
\end{proof}

A local ring $A$ is a \emph{G-ring} if $A$ is Noetherian and the fibers of $A\to A^\wedge$ are geometrically regular (cf. \citestacks{07PT}).

\begin{Thm}[{\cite[Question 7.1]{ST}}]\label{thm:SemiContOnGring}
Let $R$ be a finite type algebra over a local G-ring.
Then the function $\fp\mapsto s_\mathrm{rel}(R_\fp)$
is lower semi-continuous on $\Spec(R)$.
\end{Thm}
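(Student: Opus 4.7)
The plan is to pass through the $\Gamma$-construction to reduce to the $F$-finite case (established by Smirnov--Tucker) and then descend topologically from $\Spec(R')$ to $\Spec(R)$ along the faithfully flat map induced by completing the base, where $R' := R \otimes_A A^\wedge$.

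Let $A$ denote the local G-ring over which $R$ is of finite type, set $R' := R \otimes_A A^\wedge$, and write $f(\fp) := s_\mathrm{rel}(R_\fp)$ on $\Spec(R)$ and $g(\fP) := s_\mathrm{rel}(R'_\fP)$ on $\Spec(R')$. Since $A$ is a G-ring, $R \to R'$ is faithfully flat with geometrically regular fibers; so for any $\fp \in \Spec(R)$ and $\fP \in \Spec(R')$ over $\fp$, the local map $R_\fp \to R'_\fP$ is flat with geometrically regular closed fiber (a localization of the fiber ring $A^\wedge \otimes_A \kappa(\fp)$ over $\kappa(\fp)$). Theorem \ref{thm:RegExtn} then yields $g(\fP) = f(\pi(\fP))$ for the canonical surjection $\pi: \Spec(R') \to \Spec(R)$.

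I would next show that $g$ is lower semi-continuous on $\Spec(R')$. Fix a coefficient field $k$ of $A^\wedge$ with $p$-basis $\Lambda$. For each cofinite $\Gamma \subseteq \Lambda$, $(R')^\Gamma$ is a finite type algebra over the Noetherian $F$-finite local ring $(A^\wedge)^\Gamma$, so by the $F$-finite case of the theorem (as established in \cite{ST}, via the identification of $s_\mathrm{rel}$ with the dual $F$-signature), $\fr \mapsto s_\mathrm{rel}((R')^\Gamma_\fr)$ is lower semi-continuous on $\Spec((R')^\Gamma)$. Via the homeomorphism $\fP \mapsto \fP^\Gamma$ provided by Lemma \ref{lem:GammaFacts}(\ref{PiGammaUnivHomeo}), the function $g_\Gamma(\fP) := s_\mathrm{rel}((R')^\Gamma_{\fP^\Gamma})$ is lower semi-continuous on $\Spec(R')$. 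Theorem \ref{thm:GammaSrel} gives $g = \sup_\Gamma g_\Gamma$, and a supremum of lower semi-continuous functions is lower semi-continuous (since $\{\sup_\Gamma g_\Gamma > s\} = \bigcup_\Gamma \{g_\Gamma > s\}$), proving the claim.

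Finally, since $R \to R'$ is faithfully flat between Noetherian rings, $\pi$ is an fpqc morphism of affine schemes, hence topologically submersive: a subset $U \subseteq \Spec(R)$ is open if and only if $\pi^{-1}(U) \subseteq \Spec(R')$ is open (fpqc descent of openness, cf.\ \citestacks{02L3}). Because $\pi^{-1}(\{f > s\}) = \{g > s\}$ is open for every $s$ by the preceding step, $\{f > s\}$ is open in $\Spec(R)$, so $f$ is lower semi-continuous. The step I expect to require the most care is the fpqc topological descent across the non-finitely-presented completion map $R \to R'$, together with cleanly invoking the $F$-finite case from \cite{ST}; the rest of the argument is essentially a direct application of Theorems \ref{thm:GammaSrel} and \ref{thm:RegExtn}.
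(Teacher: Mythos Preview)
Your proposal is correct and follows essentially the same three-step reduction as the paper: pass to the completion of the base via Theorem~\ref{thm:RegExtn} and topological descent along the faithfully flat map $\pi$, then to the $F$-finite case via Theorem~\ref{thm:GammaSrel} and the fact that a supremum of lower semi-continuous functions is lower semi-continuous, and finally invoke \cite{ST}. The paper's citation \citestacks{02JY} is exactly the submersiveness statement you use.

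The one place where the paper is more careful than your sketch is the $F$-finite step. The identification of $s_\mathrm{rel}$ with the dual $F$-signature in \cite[Corollary 5.9]{ST} and the semi-continuity result \cite[Theorem 5.10]{ST} are stated under Cohen--Macaulay (and domain) hypotheses, so one cannot simply assert lower semi-continuity of $\fr\mapsto s_\mathrm{rel}((R')^\Gamma_\fr)$ on all of $\Spec((R')^\Gamma)$ by direct citation. The paper bridges this gap by observing that $s_\mathrm{rel}\geq 0$ everywhere, and that at any point where $s_\mathrm{rel}>0$ the local ring is $F$-rational, hence Cohen--Macaulay and normal; since an $F$-finite ring is excellent, the Cohen--Macaulay locus is open, so one may shrink to an open where the ring is a Cohen--Macaulay domain and then apply \cite{ST}. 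You flagged ``cleanly invoking the $F$-finite case'' as the delicate point, and this is precisely what that entails.
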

\begin{proof}
By Theorem \ref{thm:RegExtn} (and \citestacks{02JY}) it suffices to prove the result
for a finite type algebra over a Noetherian complete local ring.
Since the supremum of a family of lower semi-continuous functions is lower semi-continuous,
by the $\Gamma$-construction and Theorem \ref{thm:GammaSrel} it suffices to prove the result for an $F$-finite Noetherian ring $R$. 
Note that $R$ is excellent by Kunz's theorem \cite[(42.B) Theorem 108]{Matsumura}.
In particular, the Cohen-Macaulay locus of $R$ is open \cite[Proposition 6.11.8]{EGA4_2}.

Note $s_\mathrm{rel}(D)>0$ for a Noetherian local ring $D$ implies $D^\wedge$ $F$-rational \cite[Proposition 2.1]{ST}, hence normal and Cohen-Macaulay \cite[Theorem 4.2(b)(c)]{HH94}.
We see that if $s_\mathrm{rel}(R_\fp)>0$ for some $\fp\in\Spec(R)$ then there exists an $f\in R\setminus{\fp}$ such that $R_f$ is a Cohen-Macaulay domain.
Then on $\Spec(R_f)$ the relative $F$-rational signature coincides with the dual $F$-signature, and is lower semi-continuous, see \cite[Corollary 5.9 and Theorem 5.10]{ST}.
Since $s_\mathrm{rel}$ is always non-negative, this shows the lower semi-continuity of our function.
\end{proof}

\section{The case of the \texorpdfstring{$F$}{F}-rational signature}\label{sec:Srat}

Our method can be used to study the $F$-rational signature defined by Hochster-Yao \cite{HY}.
To be precise, in the notations of Definition \ref{def:truncatedSrel},
let
\[
s^{e,1}_{I_0}(R)=\min \{\lambda^e_{I_0}(R,I_0+uR)\mid u\in R, (I_0:u)=\fm\}.
\]
Then if $R^\wedge$ is equidimensional, $I_0$ is a parameter ideal, and $C$ controls colength differences for $\fm\in\Spec(R)$,
we have 
\[\left| s_{I_0}^{e,1}(R) -s_\mathrm{rat}(R)
\right|\leq Cp^{-e}\]
by \cite[Theorem 2.5]{HY}.
Here, $s_\mathrm{rat}(R)$ is the $F$-rational signature of $R$, denoted by
$\operatorname{r}_R(R)$ in \cite{HY}.
One can then apply the methods in \S\ref{sec:TruncatedGamma} to study the truncated invariant $s_{I_0}^{e,1}(R)$: 
instead of parametrizing the ideal $J$ with a nonzero $n$-by-$n$ matrix,
we parametrize the element $u$ by a nonzero $1$-by-$n$ vector.

The problem here is that we do not know if \'etale-local extensions preserve this truncated invariant (or their limit $s_\mathrm{rat}(R)$).
So, we cannot get the full statement of Theorem \ref{thm:GammaSe} (or Theorem \ref{thm:GammaSrel}) in this case.
However, the following holds. 
\begin{Prop}\label{prop:GammaSe1}
Let $A$ be a Noetherian complete local ring, $k$ a coefficient field of $A$, and $\Lambda$ a $p$-basis of $k$.
Let $R$ be a finite type $A$-algebra.
Let $Q\in\Spec(R)$ and let $I_0$ be a $QR_Q$-primary ideal of $R_Q$. 
Let $e$  be a positive integer.

Assume that $\kappa(Q)$ is separably closed.\footnote{Since we are in characteristic $p$, this is equivalent to say that $k$ is separably closed and that $Q$ is a maximal ideal of $R$ lying above the maximal ideal of $A$.}
Then there exists a cofinite subset $\Gamma_0=\Gamma_0(Q,I_0,e)$ of
$\Lambda$ such that for every cofinite subset $\Gamma$ of $\Gamma_0$,
$s^{e,1}_{I_0}(R_Q)=s^{e,1}_{I_0R^\Gamma_{Q^\Gamma}}(R^\Gamma_{Q^\Gamma})$.
\end{Prop}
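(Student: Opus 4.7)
The plan is to imitate the proof of Theorem~\ref{thm:GammaSe} with two modifications: the $n$-by-$n$ matrix parameterization of socle ideals is replaced by a vector parameterization of socle elements, and the use of Corollary~\ref{cor:etaleSe} at the very end of the argument is replaced by the separable closedness of $\kappa(Q)$.

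Fix $\epsilon_1,\ldots,\epsilon_n\in (I_0:_{R_Q}Q)$ mapping to a basis of $(I_0:_{R_Q}Q)/I_0$. For any flat local extension $R_Q\to D$ with $QD$ the maximal ideal, the assignment $(a_1,\ldots,a_n)\mapsto\sum_j\tilde{a}_j\epsilon_j$ gives a surjection from $(\bA^n_{\kappa(Q)}\setminus\{0\})(D/QD)$ onto the set of classes modulo $I_0D$ of elements $u\in D$ with $(I_0D:u)=QD$, and these are exactly the $u$'s relevant to $s^{e,1}_{I_0D}(D)$. Working over the strict Henselization $B$ of $R^{\Gamma_1}_{Q^{\Gamma_1}}$ after a first shrinking, and with the same $\sigma$, $\epsilon'_l$, $c_{jk}$, $d_{klh}$ as in \S\ref{sec:TruncatedGamma}, the same calculation shows that for $u=\sum_j\tilde{a}_j\epsilon_j$, the subspace $\ppower{(I_0B+uB)}{e}/\ppower{(I_0B)}{e}$ of $B/\ppower{(I_0B)}{e}$ is spanned, as a $\sigma(k_B)$-vector space, by the $m$ elements $u^{p^e}\epsilon'_l$ ($1\leq l\leq m$), whose coefficient vectors form the columns of an $m$-by-$m$ matrix $M'$ whose $l$-th column is $\bigl(\sum_k\sum_j a_j^{p^e}c_{jk}d_{klh}\bigr)_{1\leq h\leq m}$. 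Since $l((I_0B+uB)/I_0B)=1$, this yields $s^e_{I_0B}(B,I_0B+uB)=p^{-ed}\rank M'$, and the proof of Lemma~\ref{lem:SeDefinable} carries over verbatim to produce, for each $s\in\bQ$, a locally closed subset $\cC_s\subseteq \bA^n_{\kappa(Q)}\setminus\{0\}$ whose points over a subfield of $k_B$ linearly disjoint from $L=\kappa(Q)[c_{jk},d_{klh}]$ over $\kappa(Q)$ are exactly the vectors with $s^e_{I_0B}(B,I_0B+uB)=s$.

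Choose a cofinite $\Gamma_0\subseteq\Lambda$ such that, for every cofinite $\Gamma\subseteq\Gamma_0$, we have $QR^\Gamma_{Q^\Gamma}=Q^\Gamma R^\Gamma_{Q^\Gamma}$ (Lemma~\ref{lem:GammaFacts}(\ref{QRGammaPrime})), $\kappa(Q^\Gamma)$ is linearly disjoint from $L$ over $\kappa(Q)$ (Lemma~\ref{lem:GammaFacts}(\ref{FoverKQLinearDisjt})), and the finite set of attained values of $s^{e,1}_{I_0R^\Gamma_{Q^\Gamma}}(R^\Gamma_{Q^\Gamma})$ is independent of $\Gamma$ (possible since these sets are nested in $\Gamma$ by Lemma~\ref{lem:FlatLengthPropotional} and all lie in a fixed finite subset of $\bQ$). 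Let $s$ denote the common minimum. Each $\Gamma\subseteq\Gamma_0$ admits a minimizer $u'\in R^\Gamma_{Q^\Gamma}$, whose parameter vector is a $\kappa(Q^\Gamma)$-point of $\cC_s$, so $\cC_s(\kappa(Q^\Gamma))\neq\emptyset$. By Lemma~\ref{lem:AllGammaEpoint} there is a finite separable extension $E/\kappa(Q)$ with $\cC_s(E)\neq\emptyset$, and since $\kappa(Q)$ is separably closed, $E=\kappa(Q)$.

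A $\kappa(Q)$-point of $\cC_s$ lifts to an element $u\in R_Q$ with $(I_0:u)=QR_Q$, and Lemma~\ref{lem:FlatLengthPropotional} applied to $R_Q\to B$ gives $s^e_{I_0}(R_Q,I_0+uR)=s$, hence $s^{e,1}_{I_0}(R_Q)\leq s$. The reverse inequality $s^{e,1}_{I_0R^\Gamma_{Q^\Gamma}}(R^\Gamma_{Q^\Gamma})\leq s^{e,1}_{I_0}(R_Q)$ for $\Gamma\subseteq\Gamma_0$ follows from Lemma~\ref{lem:FlatLengthPropotional} applied to $R_Q\to R^\Gamma_{Q^\Gamma}$, since any such $u$ still satisfies $(I_0R^\Gamma_{Q^\Gamma}:u)=Q^\Gamma R^\Gamma_{Q^\Gamma}$. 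The sole obstacle avoided by the separably-closed hypothesis is the step invoking Lemma~\ref{lem:AllGammaEpoint}: without an analog of Corollary~\ref{cor:etaleSe} for $s^{e,1}_{I_0}$ we cannot in general absorb the finite separable extension $E/\kappa(Q)$ it produces, and the hypothesis precisely trivializes this extension.
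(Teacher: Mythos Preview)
Your proposal is correct and follows precisely the approach the paper outlines: adapt the proof of Theorem~\ref{thm:GammaSe} by parametrizing socle elements with nonzero vectors in $\bA^n_{\kappa(Q)}\setminus\{0\}$ rather than ideals with matrices, and use the separable closedness of $\kappa(Q)$ to force $E=\kappa(Q)$ in the output of Lemma~\ref{lem:AllGammaEpoint}, thereby sidestepping the missing analog of Corollary~\ref{cor:etaleSe}. One small remark: in the final step you invoke part~(\ref{SeDefinedByC}) of (your adapted) Lemma~\ref{lem:SeDefinable} for $k'=\kappa(Q)$, which is legitimate since $\kappa(Q)$ is trivially linearly disjoint from $L$ over itself; the paper's proof of Theorem~\ref{thm:GammaSe} instead uses part~(\ref{SeLessIfinC}) because there $E$ need not be linearly disjoint from $L$, but either works here and yields the needed inequality $s^{e,1}_{I_0}(R_Q)\leq s$.
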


Since $s_{\mathrm{rat}}(R)>0$ implies $R$ Cohen-Macaulay 
(in fact it implies $F$-rationality of $R^\wedge$, \cite[Theorem 4.1]{HY}),
we get from the proof of Theorem \ref{thm:GammaSrel} the following.

\begin{Cor}\label{cor:GammaSrat}
Let $A$ be a Noetherian complete local ring, $k$ a coefficient field of $A$, and $\Lambda$ a $p$-basis of $k$.
Let $R$ be a finite type $A$-algebra and let $Q\in\Spec(R)$. 

Assume that $\kappa(Q)$ is separably closed.
Then $s_\mathrm{rat}(R_Q)=\sup_\Gamma s_\mathrm{rat}(R^\Gamma_{Q^\Gamma})$,
where $\Gamma$ ranges over all cofinite subsets of $\Lambda$.
\end{Cor}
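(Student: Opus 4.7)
The plan is to imitate the proof of Theorem~\ref{thm:GammaSe}, with two essential modifications: the parameter space shifts from $n\times n$ matrices to $1\times n$ vectors, and the hypothesis that $\kappa(Q)$ is separably closed takes the place of Corollary~\ref{cor:etaleSe} in the final descent step.

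For the parametrization, I would fix $\epsilon_1,\ldots,\epsilon_n\in(I_0:_{R_Q}Q)$ that lift a basis of the $\kappa(Q)$-vector space $(I_0:_{R_Q}Q)/I_0$. Given a flat local map $R_Q\to D$ with $QD$ maximal in $D$, a nonzero vector $\ba=(a_1,\ldots,a_n)\in\kappa(D/QD)^n$ together with arbitrary lifts $\tilde{a}_i\in D$ yields an element $u=\sum_i\tilde{a}_i\epsilon_i$ satisfying $(I_0D:u)=QD$ and $u\notin I_0D$. The resulting ideal $I(D;\ba):=I_0D+uD$ is independent of the choice of lifts, is compatible with further flat local base change preserving the maximal ideal (as in \S\ref{subsec:parameterizeIdeals}), and every ideal of the form $I_0D+u'D$ with $(I_0D:u')=QD$ arises this way.

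I would then repeat the length calculation of \S4.2 with $\bA^{n^2}$ replaced by $\bA^n$. Choose $\Gamma_1\subseteq\Lambda$ so that $QR^\Gamma$ is prime for all $\Gamma\subseteq\Gamma_1$, let $B$ be the strict Henselization of $R^{\Gamma_1}_{Q^{\Gamma_1}}$, and fix a coefficient field $\sigma:k_B\to B/\ppower{(I_0B)}{e}$, a basis $\epsilon'_1,\ldots,\epsilon'_m$ of this ring over $\sigma(k_B)$, and constants $c_{ik},d_{klh}\in k_B$ as in the proof of Theorem~\ref{thm:GammaSe}. For nonzero $\ba\in k_B^n$, the $\sigma(k_B)$-span of $\{u^{p^e}\epsilon'_l:1\leq l\leq m\}$ in $B/\ppower{(I_0B)}{e}$ is encoded by an $m\times m$ matrix $M'$ whose entries are polynomials in $a_i^{p^e}$ with coefficients in $L=\kappa(Q)[c_{ik},d_{klh}]$. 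Setting $f(\ba)=\rank(M')/p^{ed}$ with $d=\HT Q$, we have $f(\ba)=s^e_{I_0B}(B,I(B;\ba))$, and Lemma~\ref{lem:SeDefinable} carries over verbatim: for each rational $s$ there is a locally closed $\cC_s\subseteq\bA^n_{\kappa(Q)}\setminus\{0\}$ detecting the condition $f(\ba)=s$ over subfields of $k_B$ linearly disjoint from $L$ over $\kappa(Q)$.

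The concluding step is where the hypothesis enters. Choose $\Gamma_2\subseteq\Gamma_1$ so that $\kappa(Q^\Gamma)$ is linearly disjoint from $L$ for all $\Gamma\subseteq\Gamma_2$, and $\Gamma_0\subseteq\Gamma_2$ so that the finite set of attained values $\{s^e_{I_0R^\Gamma_{Q^\Gamma}}(R^\Gamma_{Q^\Gamma},I(R^\Gamma_{Q^\Gamma};\ba)):\ba\neq 0\}$ is constant in $\Gamma\subseteq\Gamma_0$; let $s$ be its minimum, so $s=s^{e,1}_{I_0R^\Gamma_{Q^\Gamma}}(R^\Gamma_{Q^\Gamma})$ for all $\Gamma\subseteq\Gamma_0$. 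Exactly as in Theorem~\ref{thm:GammaSe}, $\cC_s(\kappa(Q^\Gamma))\neq\emptyset$ for all $\Gamma\subseteq\Gamma_0$, so Lemma~\ref{lem:AllGammaEpoint} yields $\cC_s(E)\neq\emptyset$ for some finite separable $E/\kappa(Q)$. The main obstacle---and what motivates the standing assumption---is that the analog of Lemma~\ref{lem:MinSadditive} fails for $s^{e,1}$, since the sum of two socle elements need not be a socle element, so Corollary~\ref{cor:etaleSe} is unavailable. The separable closedness of $\kappa(Q)$ precisely bypasses this obstruction: it forces $E=\kappa(Q)$, so we obtain a $\kappa(Q)$-point of $\cC_s$ directly, yielding an element $u\in R_Q$ with $(I_0:u)=Q$ and $s^e_{I_0}(R_Q,I_0+uR_Q)\leq s$, hence $s^{e,1}_{I_0}(R_Q)\leq s$; the reverse inequality comes from Lemma~\ref{lem:FlatLengthPropotional}.
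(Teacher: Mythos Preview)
Your write-up is a correct and detailed proof of Proposition~\ref{prop:GammaSe1}, and it matches the paper's indicated approach to that result exactly: replace the $n\times n$ matrix parametrization by a $1\times n$ vector, run the constructibility argument of \S\ref{sec:TruncatedGamma}, and use the separable closedness of $\kappa(Q)$ to force $E=\kappa(Q)$ in the output of Lemma~\ref{lem:AllGammaEpoint}, thereby bypassing the unavailable analog of Corollary~\ref{cor:etaleSe}. (One small wording quibble: the obstruction is not that a sum of socle elements can fail to be a socle element, but that the Galois-averaged ideal $I_0S+\sum_{g}g(u)S$ is generally not of the shape $I_0S+vS$ for a single $v$; the family over which $s^{e,1}$ minimizes is not closed under sums of ideals, so Lemma~\ref{lem:MinSadditive} has no analog.)

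However, what you have written proves the truncated identity $s^{e,1}_{I_0}(R_Q)=s^{e,1}_{I_0R^\Gamma_{Q^\Gamma}}(R^\Gamma_{Q^\Gamma})$ for small $\Gamma$, not the stated Corollary about $s_{\mathrm{rat}}$. The passage from the truncated invariant to $s_{\mathrm{rat}}$ is still missing, and this is precisely what the paper supplies by invoking the proof of Theorem~\ref{thm:GammaSrel}. Concretely: the inequality $s_{\mathrm{rat}}(R_Q)\geq\sup_\Gamma s_{\mathrm{rat}}(R^\Gamma_{Q^\Gamma})$ is immediate from flat base change; for the reverse, one may assume $s_{\mathrm{rat}}(R_Q)>0$, whence $R_Q$ (and hence each $R^\Gamma_{Q^\Gamma}$) is Cohen--Macaulay by \cite[Theorem~4.1]{HY}, so the estimate $|s^{e,1}_{I_0}-s_{\mathrm{rat}}|\leq Cp^{-e}$ from \cite[Theorem~2.5]{HY} is available. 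Choosing a single constant $C$ that controls $F$-colength differences for $R^{\Lambda}$ and hence for every $R^{\Gamma}$ (Theorem~\ref{thm:Polstra}), your truncated equality then yields $|s_{\mathrm{rat}}(R_Q)-s_{\mathrm{rat}}(R^\Gamma_{Q^\Gamma})|\leq 2Cp^{-e}$ for suitably small $\Gamma$, and letting $e\to\infty$ finishes. You should add this limiting step; without it the proposal proves Proposition~\ref{prop:GammaSe1} rather than Corollary~\ref{cor:GammaSrat}.
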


However, even if one can remove the separable closedness condition,
one does not necessarily get versions of 
Theorems \ref{thm:RegExtn} and \ref{thm:SemiContOnGring} since the $F$-finite case is not known. 

\end{document}